\documentclass[reqno,11pt]{amsart}
\usepackage{geometry}
\usepackage{mathtools,amssymb,amsthm,mathrsfs,color,lineno,paralist,graphicx,float}
\usepackage[colorlinks,
linkcolor=blue,
anchorcolor=green,
citecolor=blue, 
]{hyperref}
\usepackage{tikz}
\usepackage{graphicx}
\usetikzlibrary{arrows.meta}

\usepackage{enumitem}
\usepackage[T1]{fontenc}
\usepackage[utf8]{inputenc}
\usepackage{subfig} 
\usepackage[justification = centering, labelsep =period]{caption} 
\usepackage{multirow}

\usepackage{calc}
\definecolor{bleu1}{RGB}{0,57,128}
\def\bleu1{\color{bleu1}}
\usepackage{amsthm}
\usepackage{etoolbox}
\patchcmd{\section}{\normalfont}{\normalfont \bleu1}{}{}
\patchcmd{\subsection}{\normalfont}{\normalfont \bleu1}{}{}
\patchcmd{\subsubsection}{\normalfont}{\normalfont \bleu1}{}{}

\newtheorem{theorem}{Theorem}[section]
\newtheorem{proposition}{Proposition}[section]
\newtheorem{lemma}{Lemma}[section]
\newtheorem{corollary}{Corollary}[section]
\newtheorem{remark}{Remark}[section]

\newtheorem{problem}{Problem}[section]
\newtheorem{conjecture}{Conjecture}[section]

\title[Square-Root Profiles and Sharp Fourier Decay]{Square-Root Profiles and Sharp Fourier Decay
of Quasiperiodic Spectral Measures}

\author{Zhenfu Wang}
\address{
	Chern Institute of Mathematics and LPMC, Nankai University, Tianjin 300071, China
}
\email{zhenfuwang@mail.nankai.edu.cn}

\author{Qi Zhou}
\address{
	Chern Institute of Mathematics and LPMC, Nankai University, Tianjin 300071, China
}
\email{qizhou@nankai.edu.cn}

\date{}

\begin{document}

\begin{abstract}
We study spectral measures of one-dimensional quasiperiodic
Schr\"odinger operators. For subcritical  Schr\"odinger operator   with  Diophantine frequency, we
prove sharp measurable-set bounds, answering a question of Avila and
Jitomirskaya \cite{AJ};
we prove optimal Fourier decay and the sharp
decay order of the time-averaged quantum return probability, confirming
a conjecture of Aloisio, Carvalho, and de Oliveira \cite{ACO} and a prediction of
Zhong and Mosseri \cite{ZhongMosseri}.
For the subcritical almost Mathieu operator, we establish summable
square-root envelopes for the spectral density, with an optimal
density-one formulation of the lower bound, thereby answering a
question of Avila \cite{Aqi}. 

These results follow from a general theorem for
reflectionless measures under a summable square-root Craig-type
condition. We establish an exact residue expansion of the squared
Borel transform, which connects local density blow-up with long-time
averaged return probabilities: the endpoint residues determine both
the square-root edge profiles and the leading coefficient of the
time-averaged return probability.
\end{abstract}

\maketitle

\section{Introduction}

Spectral measures connect the spectral properties of an operator with
the dynamics that it generates.  Their local regularity quantifies the
concentration of spectral mass in energy, while the Fourier transform
of a spectral measure is the corresponding quantum return
amplitude.  Quantitative estimates on these objects relate spectral
geometry to temporal correlations and quantum transport
\cite{Last,BCM}. They also form a
natural problem in harmonic analysis: one asks how the size and
geometry of a measure determine cancellation in its Fourier transform
\cite{Mattila}.  Absolute continuity implies Fourier decay by the
Riemann-Lebesgue lemma, but it gives no rate.  The basic question of
this paper is therefore how the structure of a one-dimensional
Schr\"odinger operator and the geometry of its spectrum determine both
the concentration of its spectral measures and their long-time
Fourier behavior.

\subsection{Quasiperiodic Schr\"odinger operators and main results}

We consider the one-dimensional quasiperiodic Schr\"odinger operator
on $\ell^2(\mathbb Z)$,
$$
 (H_{v,\alpha,\theta}u)_n
 =u_{n+1}+u_{n-1}+v(\theta+n\alpha)u_n,
 \qquad n\in\mathbb Z,
$$
where $\theta\in\mathbb T=\mathbb R/\mathbb Z$ is the phase,
$\alpha\in\mathbb R\setminus\mathbb Q$ is the frequency, and
$v:\mathbb T\to\mathbb R$ is real analytic.  The principal example is
the almost Mathieu operator (AMO)
\begin{equation*}\label{schro}
 (H_{\lambda,\alpha,\theta}u)_n
 =u_{n+1}+u_{n-1}
 +2\lambda\cos\bigl(2\pi(\theta+n\alpha)\bigr)u_n.
\end{equation*}
Introduced as a model for an electron on a two-dimensional lattice in
a uniform magnetic field \cite{Pe,Ha,R}, this operator has become a
basic model at the intersection of spectral theory, dynamical systems,
and mathematical physics.  We refer to \cite{DF,DF2,J,Y} for broader
accounts of quasiperiodic Schr\"odinger operators.

Let $\Sigma_{v}$ denote the  spectrum of $H_{v,\alpha,\theta}$, which is independent of the phase $\theta$.   For
$f\in\ell^2(\mathbb Z)$, define
$$
 \mu_{\theta,f}(X)
 =\bigl\langle f,\mathbf1_X(H_{v,\alpha,\theta})f\bigr\rangle,
 \qquad
 \mu_{\theta,n}=\mu_{\theta,\delta_n}.
$$
For the canonical two-site measure we write
$ \nu_\theta=\frac12\bigl(\mu_{\theta,0}+\mu_{\theta,1}\bigr).$

Avila's global theory organizes the spectrum of a one-frequency
analytic Schr\"odinger operator through the behavior of the Lyapunov
exponent under complexification \cite{AvilaGlobal}.  If
$$
 A_E(x)=
 \begin{pmatrix}
 E-v(x)&-1\\
 1&0
 \end{pmatrix},
 \qquad
 L(E;\varepsilon) =L\bigl(\alpha,A_E(\cdot+i\varepsilon)\bigr),
$$
then an energy is called \emph{subcritical} when
$L(E;\varepsilon)=0$ for all sufficiently small
$|\varepsilon|$.  Correspondingly, Schr\"odinger operator $H_{v,\alpha,\theta}$ is subcritical if for any $E\in \Sigma_v$, $(\alpha, A_E)$ is subcritical. 
For Almost Mathieu operator, the operator is subcritical if and only if $0<\lambda<1$ \cite{AvilaGlobal}. 
The importance of  subcriticality is formulated by Avila's Almost Reducibility Conjecture \cite{avila2010almost,AvilaGlobal,Avi2023KAM}, which  yield absolutely continuous spectral measures \cite{AvilaAMO,avila2010almost,avila}. 
Absolute continuity, however, is only qualitative: it does not control
how much spectral mass may concentrate on a small measurable subset of
the energy axis.

For subcritical Schr\"odinger $H_{v,\alpha,\theta}$  with Diophantine frequencies,  Avila and Jitomirskaya \cite{AJ} proved the sharp interval estimate
$
\mu_{\theta,f}((E-r,E+r))\leq C_f r^{1/2},
$
for $f\in \ell^1(\mathbb Z)$.  Here $\alpha$ is Diophantine (denote by  $\alpha \in \mathrm{DC}:= \cup_{\tau,\kappa>0} \mathrm{DC}(\kappa,\tau)$),  where 
$$
\mathrm{DC}(\kappa,\tau)=
\left\{\alpha\in\mathbb{R}\setminus\mathbb{Q}:\| k\alpha\|_{\mathbb{R}/\mathbb{Z}}
\geq\frac{\kappa}{|k|^{\tau}}
\ \text{for all } k\in\mathbb{Z}\setminus\{0\}
\right\}.
$$

In
\cite[Remark~4.4]{AJ}, they asked whether the same exponent controls
arbitrary measurable sets.

\begin{problem}\label{prob:AJ}
Suppose that $\alpha \in \mathrm{DC}$, does one have
$ \mu_{\theta,f}(X)\le C_f|X|^{1/2}$
for every measurable set $X\subset\mathbb R$?
\end{problem}

Our first theorem gives an affirmative answer in the globally
subcritical Diophantine regime.

\begin{theorem}\label{thm:qp-holder}
Suppose that $\alpha \in \mathrm{DC}$,   Schr\"odinger operator $H_{v,\alpha,\theta}$  is subcritical.  Then there exists $C<\infty$ such that
$$
 \mu_{\theta,f}(X) \le C\|f\|_{\ell^1}^2|X|^{1/2}
$$
for every $\theta\in\mathbb T$, every $f\in\ell^1(\mathbb Z)$, and
every measurable set $X\subset\mathbb R$.
\end{theorem}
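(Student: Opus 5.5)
Reduce the measurable-set bound to a pointwise bound on the density of $\mu_{\theta,f}$ by a square-root envelope with uniformly summable singular parts. Then integrate that envelope over an arbitrary set using the rearrangement inequality.

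In the subcritical Diophantine regime, almost reducibility, taken as the Almost Reducibility Theorem, gives absolute continuity of $\mu_{\theta,f}$. By the Avila--Jitomirskaya argument it also gives, uniformly in $\theta$, control of the $m$-functions and transfer matrices: if $E$ lies in the spectrum at distance comparable to the gap structure, then $\|A_n(E)\|$ grows at most polynomially up to scale $n\sim r^{-1/2}$. Consequently the density satisfies
$$
\frac{d\mu_{\theta,\delta_n}}{dE}(E)\ \le\ \sum_{j} \frac{c_j}{\sqrt{|E-E_j|}} ,
$$
where $E_j$ ranges over the gap edges of $\Sigma_v$, indexed via the gap-labelling $k\alpha \bmod 1$. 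The claim to establish is $\sum_j c_j\le C$ uniformly in $\theta$ and in $n$, up to the $\ell^1$ normalisation.

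The envelope comes from the paper's general theorem for reflectionless measures under a summable square-root Craig-type condition. The diagonal Green's function of a subcritical ergodic operator is reflectionless on $\Sigma_v$, and the Craig representation expresses $\mathrm{Im}\,G$ through the gap edges. Gap lengths decay exponentially in $|k|$, via the Moser--P\"oschel argument and almost reducibility, while the Diophantine condition separates the gaps polynomially. Together these should give the summable Craig-type condition, and with it a bound of the form $\sum_j \sqrt{\text{local weight}}<\infty$. For general $f\in\ell^1$, I would use $\mu_{\theta,f}\le (\sum_n|f_n|\,\mu_{\theta,n}^{1/2})^2$, in the sense of densities via Cauchy--Schwarz for the spectral projection. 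This combined with the uniform bound for each $\mu_{\theta,n}$ (shift covariance $\mu_{\theta,n}=\mu_{\theta+n\alpha,0}$ type, plus $\mu_{\theta,1}$) gives the factor $\|f\|_{\ell^1}^2$.

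Conclusion: for any measurable $X$ and each singular point $E_j$, the rearrangement inequality gives
$$
\int_X \frac{dE}{\sqrt{|E-E_j|}}\ \le\ \int_{-|X|/2}^{|X|/2}\frac{dt}{\sqrt{|t|}}=2\sqrt{2}\,|X|^{1/2}.
$$
Summing over $j$ yields $\mu_{\theta,f}(X)\le 2\sqrt2\,(\sum_j c_j)\,\|f\|_{\ell^1}^2|X|^{1/2}$. The key point is that this step needs summability of the coefficients, not merely a local $r^{1/2}$ bound at each point. This is exactly why the interval estimate of \cite{AJ} does not suffice.

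The main obstacle is proving that the square-root envelope has summable coefficients uniformly in $\theta$, i.e.\ verifying the Craig-type condition. Edge singularities of collapsing gaps accumulate, so the coefficient $c_j$ attached to gap $k$ must be shown to be $\lesssim$ (gap length)$^{1/2}$ times a polynomial factor in $|k|$. The exponential decay of gap lengths then absorbs this factor. I would try to extract this from the reflectionless product formula $\prod_j \frac{E-\mu_j}{\sqrt{(E-E_j^-)(E-E_j^+)}}$, bounding each factor away from its own gap by $1+O(|G_j|/\mathrm{dist})$. Diophantine separation of gap centers makes the product of these factors converge uniformly.
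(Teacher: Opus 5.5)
Your proposal is correct and follows the paper's proof. The steps are the same: reflectionlessness of every $\mu_{\theta,n}$ on $\Sigma$; verification of \eqref{eq:sqrt-craig} from exponential decay of $\gamma_k$ together with $\operatorname{dist}(G_k,G_l)\ge c|k-l|^{-2\tau}$ (this uses the $1/2$-H\"older continuity of the IDS, not the Diophantine condition alone); the uniform envelope of Corollary~\ref{thm:holder} integrated by rearrangement to get $2\sqrt2\,|X|^{1/2}$; and $\|P_\theta(X)f\|\le\sum_n|f(n)|\,\mu_{\theta,n}(X)^{1/2}$ for $f\in\ell^1$.

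Two minor corrections. The transfer-matrix paragraph does not by itself give a global envelope and can be dropped. The factor-by-factor bounds must be taken at the gap endpoints, with denominators $\eta_{kl}$ as in Lemma~\ref{lem:fin}; there they give $\mathcal C_k\le\exp\bigl(C\log^2(2+|k|)\bigr)$, which is quasi-polynomial rather than polynomial but is still absorbed by $e^{-c|k|}$.
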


Earlier interval estimates include the exponential arithmetic regime
of Liu and Yuan \cite{LiuYuan} and the analytic multifrequency setting
of Zhao \cite{Zhao}.  Li, You, and Zhou obtained exact local
distribution results for the subcritical almost Mathieu operator and
stratified H\"older estimates at subcritical energies
\cite{LiYouZhou}.  Theorem~\ref{thm:qp-holder} is stronger than an
interval estimate: it controls every measurable set.  Equivalently,
the densities of the corresponding spectral measures satisfy uniform
weak-$L^2$ bounds.

We next consider Fourier decay.  For a finite Borel measure $\mu$ on
$\mathbb R$, let
$$
 \widehat\mu(t)=\int_{\mathbb R}e^{-itE}\,d\mu(E).
$$
If $\mu$ is absolutely continuous, the Riemann-Lebesgue lemma gives
$\widehat\mu(t)\to0$, but gives no quantitative rate.  For a
Schr\"odinger operator $H$, the spectral theorem gives
$$
 \widehat\mu_\psi(t)=\langle\psi,e^{-itH}\psi\rangle,
$$
so $|\widehat\mu_\psi(t)|^2$ is the return probability of the state
$\psi$.  Fourier decay thus measures both cancellation in energy and
long-time recurrence in quantum dynamics
\cite{Last,BCM,KetzmerickPetschelGeisel}.

Periodic operators provide the reference case: Floquet theory reduces
their propagators to oscillatory integrals; see \cite{DFY} for the
optimal known uniform dispersive estimate.  For aperiodic models the
picture is subtler.  Sturmian operators have singular continuous
spectral measures and fractal spectra.  For the Fibonacci Hamiltonian
at small coupling, Leclerc proved polynomial Fourier decay for the
density-of-states measure, and hence for the phase-averaged return
amplitude \cite{Leclerc}. 

For small analytic quasiperiodic potentials with Diophantine
frequency, Bambusi and Zhao obtained uniform dispersive estimates
\cite{BambusiZhao}.  For long-range crystals, Leclerc, Sabri, and Sahlsten constructed
examples with arbitrarily fast polynomial dispersion, driven by
oscillatory cancellation in Weierstrass-type Floquet functions
\cite{LeclercSabriSahlsten}.
These bound all matrix elements of the
propagator, including the diagonal elements, which are the Fourier
transforms of the single-site spectral measures.
Their squared moduli are the corresponding quantum return
probabilities. For the subcritical almost Mathieu operator,
numerical work of Zhong and Mosseri \cite{ZhongMosseri}
motivated the following conjecture of Aloisio, Carvalho, and
de Oliveira \cite{ACO} on the precise decay of the time-averaged
return probability.

\begin{conjecture}[{\cite[Conjecture~1.3]{ACO}}]
\label{conj:ACO}
For subcritical  AMO, one has
$$
 \frac1T\int_0^T
 \left|\left\langle \delta_0,e^{-itH_{\lambda,\alpha,\theta}}\delta_0 \right\rangle
 \right|^2\,dt \sim\frac{\log T}{T},
 \qquad T\to\infty.
$$
\end{conjecture}

In the following, we confirm  this conjecture  for
Diophantine frequencies.
In fact, we will prove a stronger result: that the Fourier decay rate is $t^{-1/2}$:

\begin{theorem}\label{thm:qp-fourier}
Under the assumptions of Theorem~\ref{thm:qp-holder}, there exist
$0<c\le C<\infty$ such that, for every $\theta\in\mathbb T$ and
$n\in\mathbb Z$,
$$
 |\widehat\mu_{\theta,n}(t)| \le C(1+|t|)^{-1/2},
 \qquad  \limsup_{|t|\to\infty}
 |t|^{1/2}|\widehat\mu_{\theta,n}(t)|\ge c.
$$
Moreover,
$$
\lim_{T\to\infty}\frac{1}{\log T}\int_0^T|\widehat\mu_{\theta,n}(t)|^2\,dt>0.
$$
\end{theorem}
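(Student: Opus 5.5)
The plan is to reduce everything to the structure of the density of $\mu_{\theta,n}$ near the band edges. For subcritical $H_{v,\alpha,\theta}$ with $\alpha\in\mathrm{DC}$, almost reducibility together with the Avila--Jitomirskaya machinery gives the following picture. The spectrum $\Sigma_v=[E_0,E_\infty]\setminus\bigcup_j G_j$ has gaps $G_j=(E_j^-,E_j^+)$ whose lengths decay exponentially in the label $|k_j|$. Every $\mu_{\theta,n}$ is reflectionless and absolutely continuous, with density
\[
\rho_{\theta,n}(E)=\frac{g_{\theta,n}(E)}{\sqrt{|E-E_0||E-E_\infty|\prod_j|E-E_j^-||E-E_j^+|}/\prod_j|E-c_j|}\quad\text{on }\Sigma_v .
\]
Here $c_j\in \overline{G_j}$ are Dirichlet-type points. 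I will first establish, as the paper's "summable square-root Craig-type condition," a bound of the form $\rho_{\theta,n}(E)\le \sum_{e} a_e |E-e|^{-1/2}\mathbf 1_{I_e}(E)$ with $\sum_e a_e<\infty$, uniformly in $\theta$. The sum runs over all gap edges $e$, and $I_e$ is the adjacent spectral piece. I will also establish the matching lower asymptotic $\rho_{\theta,n}(E)\sim r_e|E-e|^{-1/2}$ at each edge, with residue $r_e\ge0$, and show that $r_e>0$ for at least one edge (for instance, the outer edges $E_0,E_\infty$ for generic $\theta$). The coefficients $a_e$ are controlled by the Diophantine condition, since gap lengths decay exponentially while the number of gaps with label $|k|\le N$ grows polynomially.

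\emph{Upper bound.} I will write $\widehat\mu_{\theta,n}(t)=\sum_e\int_{I_e}e^{-itE}\rho(E)\,dE$. For each edge piece, I will split $\rho=r_e|E-e|^{-1/2}\chi_e+\text{remainder}$, where $\chi_e$ is a smooth cutoff. Each model term gives $\bigl|\int e^{-itE}|E-e|^{-1/2}\chi_e\bigr|\le C t^{-1/2}$ (a Fresnel-type integral) with a constant independent of the interval length, because the truncated integrals $\int_0^{\ell}e^{-its}s^{-1/2}ds$ are bounded by $C t^{-1/2}$ uniformly in $\ell$. Summability of $a_e$ then yields $|\widehat\mu|\le C t^{-1/2}$. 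The remainder terms need to be handled so that only summable bounds appear. One option is a van der Corput/integration-by-parts step on each band, using derivative bounds $|\rho'|\lesssim a_e|E-e|^{-3/2}$ derived from the same Craig-type product. I expect this uniform, summable control of the remainder across infinitely many gaps to be the main obstacle. The first thing I would try is the residue expansion of the squared Borel transform $m(z)^2$, which expresses $\rho$ as a sum of exact $(E-e)^{-1/2}$ pieces plus a globally bounded-variation part.

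\emph{Lower bounds.} For the $\limsup$: the leading asymptotic of $\widehat\mu(t)$ is $\sqrt{\pi}\,t^{-1/2}\sum_e r_e e^{-ite}e^{\pm i\pi/4}+o(t^{-1/2})$. This is an almost periodic function of $t$ with absolutely summable coefficients and not all of them zero, so its $\limsup$ in modulus is positive; the Bohr mean of $|\cdot|^2$ equals $\sum r_e^2>0$. For the time average, I will compute $\int_0^T|\widehat\mu|^2dt$ directly from the identity $\int_0^T|\widehat\mu|^2=\iint \frac{\sin(T(E-E'))}{E-E'}\rho(E)\rho(E')\,dE\,dE'$ (up to a phase factor). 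The contribution near the diagonal is $\approx\pi\int_{|E-E'|\lesssim 1/T}\rho^2$, and $\int_{\Sigma\setminus \text{$1/T$-nbhd of edges}}\rho^2\approx \sum_e r_e^2\log T$. This gives $\lim \frac1{\log T}\int_0^T|\widehat\mu|^2 = c\sum_e r_e^2>0$, which is consistent with the Bohr mean computed above. Positivity for every $\theta,n$ follows because $\rho_{\theta,n}$ cannot vanish to order $\ge1/2$ at every edge, since otherwise the reflectionless representation forces $\mu_{\theta,n}=0$. I will verify this using the Herglotz representation of $m$.
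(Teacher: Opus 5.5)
Your plan does not prove the theorem. The pointwise upper bound has a genuine gap, and the lower bounds rest on claims that are unproved or false.

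\textbf{Upper bound.} The decisive step, $|\widehat\mu_{\theta,n}(t)|\le C|t|^{-1/2}$, is left open, and the route you sketch cannot work as stated.

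\begin{enumerate}
\item In this setting the spectrum is in general a Cantor set; for the almost Mathieu operator all gaps are open. So there is no ``adjacent spectral piece'' $I_e$ that is an interval: every one-sided neighbourhood of an edge contains infinitely many gaps on which $\rho=0$.
\item Hence band-by-band Fresnel or van der Corput estimates, and integration by parts using bounds on $\rho'$, are unavailable. Also, $\rho-r_e|E-e|^{-1/2}\chi_e$ is as large as the model term itself on each of those gaps.
\item The residue expansion does not give what you expect. It expands $F^2$, so on $\Sigma$ it yields $\pi^2\rho^2$ as a signed sum of terms $\beta_e^2/|E-e|$; this is \eqref{dens}. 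Thus $\rho$ is the square root of that sum, not a sum of $|E-e|^{-1/2}$ pieces plus a bounded-variation part.
\item A summable upper envelope alone gives no decay rate.
\end{enumerate}

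The missing idea is an $L^1$ smoothing estimate extracted from the squared identity. Let $u_y=\pi^{-1}\operatorname{Im}F(\cdot+iy)$. Where $w>u_y$ one has $\pi^2(w-u_y)^2\le\pi^2(w^2-u_y^2)\le\pi^2w^2+\operatorname{Re}\bigl(F(x+iy)^2\bigr)$. By \eqref{pf} and \eqref{dens}, the right side is an explicit sum of terms $\pm y^2\beta_e^2/\bigl(d_e(d_e^2+y^2)\bigr)$ with $d_e=x-e$. Taking square roots and integrating gives $\int(w-u_y)_+\le\frac{8}{\pi}\sqrt y\sum_e\beta_e$. Mass conservation, $\int w=\int u_y=1$, turns this one-sided bound into $\|w-u_y\|_{L^1}\lesssim\sqrt y$. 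Then $(1-e^{-y|t|})|\widehat\mu(t)|\le\|w-u_y\|_{L^1}$ with $y=|t|^{-1}$ gives the bound.

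\textbf{Lower bounds.}

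\begin{enumerate}
\item The pointwise edge asymptotic $\rho\sim r_e|E-e|^{-1/2}$ is false in general. Proposition~\ref{cancel} shows that for even potentials at $\theta=0$ the density stays bounded on positive-measure sets at every scale near every edge. Only the rescaled $L^1$ profile of Corollary~\ref{cor:L1} survives.
\item The expansion $\widehat\mu(t)=t^{-1/2}\sum_e(\cdots)+o(t^{-1/2})$ is stronger than the upper bound you could not prove. Your $\limsup$ argument therefore has no foundation.
\item The Dirichlet-kernel computation of $\int_0^T|\widehat\mu|^2$ is only heuristic. The kernel $\sin(T(E-E'))/(E-E')$ is neither positive nor absolutely integrable. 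The cross terms between distinct, accumulating edges and the infinite tail are left uncontrolled.
\end{enumerate}

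The paper avoids all of this with the Abel mean. Plancherel gives $\int_{\mathbb R}e^{-2y|t|}|\widehat\mu(t)|^2\,dt=\pi^{-1}\int_{\mathbb R}\bigl(|F|^2-\operatorname{Re}(F^2)\bigr)(x+iy)\,dx$. The second term integrates to zero by a contour argument. The first is handled via $|F|^2=|F^2|$, integrated directly from the pole expansion: each pole contributes $2\beta_e^2\log y^{-1}$, and the tail is controlled by summability, so no cross terms ever appear. Karamata's Tauberian theorem then gives the time average. The $\limsup$ bound is deduced from the time average, not the other way round.

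\textbf{Uniformity and the Craig condition.}

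\begin{enumerate}
\item ``$r_e>0$ at the outer edges for generic $\theta$'' or ``not all residues vanish'' does not give a constant $c$ independent of $\theta$ and $n$. The paper uses $\beta_{0,\pm}\ge(\sqrt{\gamma_0}\,\mathcal C_0)^{-1}$, which holds for every $\mu\in\mathcal R(\Sigma)$ because the Dirichlet data lie in the closed gaps.
\item Verifying \eqref{eq:sqrt-craig} needs lower bounds on gap separations, $\eta_{kl}\ge c|k-l|^{-2\tau}$. A count of gaps with $|k|\le N$ is not the relevant Diophantine input.
\end{enumerate}
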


The result of \cite{Leclerc} and the present work concern complementary
regimes.  Leclerc \cite{Leclerc} studies the density-of-states measure, and hence a
phase average of the diagonal return amplitude; the decay exponent is
positive but is not claimed to be sharp.  Here we treat the local
spectral measure at every phase and every lattice site.  In the
Diophantine subcritical regime, we obtain the sharp decay rate
$t^{-1/2}$, and the
exact limit $(\log T)/T$ of the time-averaged return probability.
The mechanism is also different: the decay comes from the
reflectionless Green function and the square-root geometry of the
spectral gaps, rather than from nonlinear self-similarity of the
support \cite{Leclerc}.

We should mention the  arithmetic restriction is necessary for previous results. 
  Let $q_n$ denote the denominators of the continued-fraction
approximants of $\alpha$, and set
$
 \beta(\alpha)
 =\limsup_{n\to\infty}\frac{\log q_{n+1}}{q_n},$ which measures how well the frequency can be approximated by rationals.
The following consequence of Avila-Last-Shamis-Zhou
\cite{AvilaLastShamisZhou} shows that sufficiently Liouville
frequencies lead to a completely different quantitative behavior.

\begin{theorem}\label{prop:liouville-obstruction}
For AMO with $e^{-2/3\beta(\alpha)}<|\lambda|<e^{2/3\beta(\alpha)}$.  Then there exists a dense
$G_\delta$ set $\Theta\subset\mathbb T$ such that, for every
$\theta\in\Theta$ and every $s>0$,
\begin{equation}\label{eq:liop}
 \sup_{0<|I|\le1}\frac{\nu_\theta(I)}{|I|^s} =\infty, \qquad 
 \limsup_{|t|\to\infty}|t|^s|\widehat\nu_\theta(t)|=\infty,
\end{equation}and
\begin{equation}\label{eq:lioa}
 \limsup_{T\to\infty} T^s\frac1T\int_0^T|\widehat\nu_\theta(t)|^2\,dt=\infty.
\end{equation}
\end{theorem}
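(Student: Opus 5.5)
The plan is to import the spectral-dimension result of Avila--Last--Shamis--Zhou \cite{AvilaLastShamisZhou} and then turn it into the three quantitative failures in \eqref{eq:liop} and \eqref{eq:lioa}.

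As I recall it, \cite{AvilaLastShamisZhou} states the following. For the AMO with $e^{-2/3\beta(\alpha)}<|\lambda|<e^{2/3\beta(\alpha)}$ there is a dense $G_\delta$ set of phases $\Theta$ for which the spectral measure has zero upper local (Hausdorff) dimension. Concretely, for $\theta\in\Theta$ there are energies $E_k$ and scales $\epsilon_k\to0$ such that
\[
\nu_\theta\bigl((E_k-\epsilon_k,E_k+\epsilon_k)\bigr)\ge \epsilon_k^{s_k},\qquad s_k\to0 .
\]
This mass concentration comes from a sequence of periodic approximants: the solution is almost localized on blocks of length $q_n$. I would quote their theorem in the form "$\nu_\theta$ is not $s$-uniformly Hölder continuous for any $s>0$." If they state it only for $\mu_{\theta,0}$, I transfer it to $\nu_\theta$ using $\nu_\theta\ge\frac12\mu_{\theta,0}$. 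With this in hand, the first half of \eqref{eq:liop} is immediate: for fixed $s$, take $k$ with $s_k<s$, and then $\nu_\theta(I_k)/|I_k|^s\ge 2^{-s}\epsilon_k^{s_k-s}\to\infty$.

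For \eqref{eq:lioa} I use Strichartz's inequality in its standard quantum-dynamics form (Last \cite{Last}). For any finite measure $\mu$ and any $T>0$,
\[
\frac1T\int_0^T|\widehat\mu(t)|^2\,dt\;\ge\; c\,\sup_{E}\mu\bigl((E-1/T,E+1/T)\bigr)^2 .
\]
To prove it, bound the time average below by a Gaussian- or Fejér-weighted average. That weighted average equals $\iint K(T(E-E'))\,d\mu\,d\mu$ with $K\ge c>0$ on $[-2,2]$ and $K\ge0$ elsewhere, and restricting to one interval gives the bound. Now take $T=1/\epsilon_k$. The right side is at least $c\,\epsilon_k^{2s_k}$, so $T^s\cdot\frac1T\int_0^T|\widehat\nu_\theta|^2\ge c\,\epsilon_k^{2s_k-s}\to\infty$ once $2s_k<s$. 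This gives \eqref{eq:lioa} for every $s>0$.

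The Fourier half of \eqref{eq:liop} I argue by contradiction. Suppose $|\widehat\nu_\theta(t)|\le C|t|^{-s}$ for $|t|\ge1$. Then
\[
\frac1T\int_0^T|\widehat\nu_\theta|^2\le \frac{C'}{T}\bigl(1+T^{1-2s}\bigr)
\]
(with a log if $s=1/2$), which is $\le C''T^{-\min(2s,1)}\log T$. This contradicts \eqref{eq:lioa} applied with exponent $\min(2s,1)/2$. Alternatively, a Fourier decay rate $s$ implies uniform $\min(s,1/2)$-Hölder-type interval bounds via $\mu(I)^2\lesssim \frac1T\int_0^T|\widehat\mu|^2$, contradicting the first part. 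The $\limsup$ is therefore $+\infty$, because if it were finite the decay bound would hold for large $|t|$.

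The main obstacle is extracting exactly the right statement from \cite{AvilaLastShamisZhou}: the parameter window, the genericity of $\Theta$, and the fact that the dimension statement is uniform over all $s>0$ on a single $G_\delta$ set. If their result is stated as "packing/upper dimension zero" rather than as interval masses, I would use the definition of upper local dimension at $\nu_\theta$-typical points, $\limsup_{\epsilon\to0}\log\nu(B_\epsilon(E))/\log\epsilon=0$, to produce the sequence $(E_k,\epsilon_k)$. Intersecting over rational $s$ keeps the set a dense $G_\delta$.
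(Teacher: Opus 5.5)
\textbf{There is a genuine gap, in the only nontrivial step.} Your downstream steps are sound. The Fej\'er-kernel bound $\frac1T\int_0^T|\widehat\mu|^2\,dt\ge c\,\sup_E\mu((E-1/T,E+1/T))^2$ turns interval concentration into \eqref{eq:lioa} directly; use the Fej\'er weight, which is dominated by $\mathbf 1_{[-T,T]}$, since a Gaussian weight is not. The paper instead argues by contradiction through Last's theorem. Your deduction of the pointwise half of \eqref{eq:liop} from \eqref{eq:lioa} is the same as the paper's.

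The gap is upstream. You assume that Avila--Last--Shamis--Zhou already give a dense $G_\delta$ set of phases on which $\nu_\theta$ fails to be uniformly $s$-H\"older for every $s>0$. That is essentially the first half of \eqref{eq:liop} itself, and you flag it as ``the main obstacle'' without supplying an argument. The paper takes from that reference only the phase-averaged statement: the density of states measure $dN_{\lambda,\alpha}=\int_{\mathbb T}\mu_{\theta,0}\,d\theta$ satisfies $\sup_{0<|I|\le1}dN_{\lambda,\alpha}(I)/|I|^s=\infty$ for every $s>0$.

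Your recollection also cannot be right on the whole parameter window. For $e^{-2\beta(\alpha)/3}<|\lambda|<1$ the measure $\nu_\theta$ is absolutely continuous. It therefore has dimension one, and local dimension $1$ at $\nu_\theta$-almost every point. So your fallback via ``upper local dimension at $\nu_\theta$-typical points'' fails there. The phenomenon is a failure of uniform H\"older continuity along intervals whose centers $E_k$ move with the scale, not a dimension statement.

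The missing idea is a Baire-category transfer from the phase average to generic phases. It needs uniformity in $\theta$: an average of H\"older measures with unbounded constants can fail to be H\"older, so an almost-every-phase argument gives nothing. The paper's steps are as follows.
\begin{itemize}
\item Set $\mathcal F_{s,M}=\{\theta:\nu_\theta(I)\le M|I|^s\text{ for all bounded open }I\}$. This set is closed, because $\theta\mapsto\nu_\theta(I)$ is lower semicontinuous for open $I$ by norm continuity of $\theta\mapsto H_{\lambda,\alpha,\theta}$.
\item The recurrence $\delta_2=H_{\lambda,\alpha,\theta}\delta_1-2\lambda\cos 2\pi(\theta+\alpha)\,\delta_1-\delta_0$ and its reverse give $C_\lambda^{-1}\nu_\theta\le\nu_{\theta+\alpha}\le C_\lambda\nu_\theta$. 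This is why the two-site measure is used.
\item Suppose $\mathcal B_s=\bigcap_M(\mathbb T\setminus\mathcal F_{s,M})$ were empty. Baire gives an open $U\subset\mathcal F_{s,M_0}$. By minimality, finitely many translates $U-k_j\alpha$ cover $\mathbb T$, so the comparison yields $\nu_\theta(I)\le C_s|I|^s$ for all $\theta$. Integrating, using $\mu_{\theta,0}\le 2\nu_\theta$, contradicts the non-H\"older continuity of $dN_{\lambda,\alpha}$.
\item $\mathcal B_s$ is invariant under rotation by $\alpha$, hence dense. So each open set $\mathbb T\setminus\mathcal F_{s,M}$ is dense, and $\Theta=\bigcap_{j,M}(\mathbb T\setminus\mathcal F_{1/j,M})$ is a dense $G_\delta$.
\end{itemize}
With this in place, the rest of your argument goes through.
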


Theorem~\ref{prop:liouville-obstruction} crosses the transition between
two spectral types.  For $|\lambda|<1$, the spectrum is purely
absolutely continuous \cite{AvilaAMO}; hence
$\widehat\nu_\theta(t)\to0$, but \eqref{eq:liop} shows that the decay
has no positive power-law rate.  For
$1\leq|\lambda|<e^{2\beta(\alpha)/3}<e^{\beta(\alpha)},$
the spectrum is purely singular continuous for every phase
\cite{AYZ1}.  Wiener's theorem still gives decay of the time-averaged
Fourier correlation, while \eqref{eq:lioa} excludes every positive
power-law rate.  Thus the obstruction persists across a change of
spectral type: absolute or singular continuity determines the
qualitative nature of the measure, whereas the arithmetic of the
frequency controls its quantitative temporal decay.  In the singular
continuous regime, \eqref{eq:liop} excludes polynomial pointwise
decay but does not assert that $\widehat\nu_\theta(t)\to0$.

\subsection{Square-root envelopes at spectral edges}

The main  difficulty in Problem~\ref{prob:AJ} and in sharp Fourier
decay is the accumulation of square-root singularities at spectral
gap edges \cite{AJ,ACO}.   Indeed, after the completion of the exact local
distribution  for the subcritical almost Mathieu operator
\cite{LiYouZhou}, Avila \cite{Aqi} asked the second author whether these edge
singularities admit a global pointwise description.  We formulate his
question as follows.

\begin{problem} \cite{Aqi}\label{prob:Avila}
For the subcritical almost Mathieu operator with Diophantine
frequency, determine whether its spectral density admits summable
upper and lower square-root envelopes over the spectral gap edges.
\end{problem}

Now we write the spectrum of AMO as
$ \Sigma=[b_0,a_0]\setminus
 \bigcup_{k\in\mathbb{Z}\backslash\{0\}}(a_k,b_k),$
where $(a_k,b_k)$  is the  open gap by gap labelling theorem \cite{JM}, and all the gaps are open by solution to non-critical Dry Ten Martini Problem \cite{AJ,AJAR,AYZ}. 
The index $0$ records the two exterior edges.  Since $\nu_\theta$ is
purely absolutely continuous \cite{AvilaAMO}, write
$ d\nu_\theta(E)=w_\theta(E)\,dE.$
For $k\in\mathbb{Z}$, let
$$
 U_{k,\theta,-}\subset\Sigma\cap(-\infty,a_k),
 \qquad
 U_{k,\theta,+}\subset\Sigma\cap(b_k,\infty).
$$
We say that these sets have \emph{one-sided density one at
the spectral edges} if for any $k\in\mathbb{Z}$
$$
 \lim_{r\downarrow0} \frac{|U_{k,\theta,-}\cap(a_k-r,a_k)|}{r}=
 \lim_{r\downarrow0} \frac{|U_{k,\theta,+}\cap(b_k,b_k+r)|}{r}=1.
$$
The following theorem gives the upper envelope requested by Avila and
identifies the correct form of the lower envelope.

\begin{theorem}\label{thm:qp-density}
Consider the almost Mathieu operator with $0<\lambda<1$ and
Diophantine frequency $\alpha$.  There exist positive coefficients
$(c_k)_{k\in\mathbb{Z}}$ and $(d_k)_{k\in\mathbb{Z}}$ satisfying
$ \sum_{k\in\mathbb{Z}}(c_k+d_k)<\infty,$
and measurable sets $U_{k,\theta,\pm}$ having one-sided density one
at the spectral edges, such
that, for Lebesgue almost every
$E\in\mathbb R$,
\begin{equation}\label{eq:upp-bound}
     0\le w_\theta(E) \le \mathbf1_\Sigma(E)
 \sum_{k\in\mathbb{Z}}c_k \left( \frac1{\sqrt{|E-a_k|}} +\frac1{\sqrt{|E-b_k|}} \right),
\end{equation}
whereas
\begin{equation}\label{eq:low-bound}
 w_\theta(E) \ge \frac1{4\pi}\mathbf1_\Sigma(E)
 +\mathbf1_\Sigma(E) \sum_{k\in\mathbb{Z}}d_k \left(
 \frac{\mathbf1_{U_{k,\theta,-}}(E)}{\sqrt{a_k-E}}
 +\frac{\mathbf1_{U_{k,\theta,+}}(E)}{\sqrt{E-b_k}}
 \right).
\end{equation}
The coefficients are independent of $\theta$.
\end{theorem}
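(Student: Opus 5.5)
We would derive Theorem~\ref{thm:qp-density} from the reflectionless structure of $\nu_\theta$. The Borel transform $m(z)=\int d\nu_\theta(E)/(E-z)$ is built from the diagonal Green functions $G_{00},G_{11}$. For a reflectionless operator on $\Sigma$, each diagonal Green function has the Craig-type product representation
\[
G_{00}(z)=\frac{-1}{\sqrt{(z-a_0)(z-b_0)}}\prod_{k\neq0}\frac{z-\mu_k(\theta)}{\sqrt{(z-a_k)(z-b_k)}},
\]
with a single zero $\mu_k(\theta)\in[a_k,b_k]$ in each gap. On $\Sigma$ this gives the density $w_\theta=\frac1{2\pi}\,\mathrm{Im}\,(G_{00}+G_{11})$. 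Its modulus is a product of factors of the form
\[
\frac{|E-\mu_k|}{\sqrt{|E-a_k|\,|E-b_k|}}.
\]

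\textbf{Upper bound.} Write each factor as
\[
\frac{|E-\mu_k|}{\sqrt{|E-a_k||E-b_k|}}\le 1+\frac{\sqrt{\ell_k}}{\sqrt{|E-a_k|}}+\frac{\sqrt{\ell_k}}{\sqrt{|E-b_k|}}\Big(\le 1+ \ldots\Big),
\]
where $\ell_k=b_k-a_k$, and expand the product. The key input is that gap lengths decay fast. For subcritical AMO with Diophantine $\alpha$, almost reducibility gives $\ell_k\le Ce^{-c|k|}$ (Amor, Leguil--You--Zhao--Zhou). Using this, we would show the product is dominated by $\sum_k c_k\bigl(|E-a_k|^{-1/2}+|E-b_k|^{-1/2}\bigr)$ with $c_k\approx C\sqrt{\ell_k}$ times a uniform constant. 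Near the edge of gap $k$, only the $k$-th factor blows up. The remaining factors are bounded by $\prod_{j\ne k}(1+C\sqrt{\ell_j}/\mathrm{dist}(a_k,\{a_j,b_j\})^{1/2})$.

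\textbf{Main obstacle.} Controlling this cross product uniformly is the hard step, because gaps accumulate. The distance between gaps $j$ and $k$ is at least $\|(j-k)\alpha\|$-type quantities, up to the IDS Hölder-$1/2$ bound of \cite{AJ}. So we need $\sum_j \sqrt{\ell_j}\,\mathrm{dist}^{-1/2}<\infty$ uniformly in $k$. This holds because the exponential decay of $\ell_j$ beats the polynomial Diophantine lower bound $\kappa|j-k|^{-\tau}$ (we would argue gap separation via the IDS taking value $j\alpha$ mod $1$ on gap $j$). The same bound shows that the $\theta$-dependence through $\mu_k(\theta)$ enters only via $|E-\mu_k|\le \ell_k+|E-a_k|$. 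This makes the coefficients independent of $\theta$.

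\textbf{Lower bound.} For the constant $\frac1{4\pi}$, note that $\nu_\theta$ averages two sites, so $G_{00}+G_{11}$ can be handled through the $2\times2$ matrix-valued Weyl function. A trace/determinant identity gives
\[
\mathrm{Im}(G_{00}+G_{11})\ge \tfrac12\,\mathrm{Im}\,(\text{free-like term}).
\]
We would use the representation of $w_\theta$ through the Weyl–Titchmarsh functions $m_\pm$ on $\Sigma$, which satisfy $m_+=\overline{m_-}$ by reflectionlessness:
\[
w_\theta=\frac{1}{\pi}\,\frac{\mathrm{Im}\,m_+\,(1+|m_+|^2)}{|m_+-\overline{m_+}|^2}\cdot(\dots)\ \ge\ \frac1{4\pi}.
\]
This comes from the elementary inequality $(1+|m|^2)/(2\,\mathrm{Im}\,m)\ge 1$, after checking normalisations.

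\textbf{Edge profiles.} The sets $U_{k,\theta,\pm}$ exist because the local profile at $a_k$ is exactly $\mathrm{Res}_k\cdot|E-a_k|^{-1/2}(1+o(1))$. The residue is the one from the endpoint residue expansion, which is nonzero unless $\mu_k(\theta)=a_k$. When $\mu_k=a_k$, that site's $G$ vanishes at the edge, but the other site does not, since $\mu_k$ differs for $G_{00}$ and $G_{11}$. We would then take $U_{k,\theta,-}$ to be the set where the remaining product exceeds half its limit; this set has density one by continuity. The $d_k$ are chosen as $c\sqrt{\ell_k}$, using the uniform lower bound on the cross product, which again comes from summability. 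The hardest point is making $d_k$ $\theta$-independent when $\mu_k(\theta)$ approaches an edge. This is resolved by the two-site averaging, since the zeros of $G_{00}$ and $G_{11}$ cannot both collide with the same edge.
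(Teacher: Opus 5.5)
\textbf{The lower-bound argument has a genuine gap where you produce $U_{k,\theta,-}$.} You assert a pointwise edge profile $\mathrm{Res}_k\,|E-a_k|^{-1/2}(1+o(1))$. You then claim that the set where $R_k(E)=\prod_{l\ne k}|E-\mu_l|/\sqrt{|E-a_l||E-b_l|}$ exceeds half of $R_k(a_k)$ has density one ``by continuity''. But $R_k$ has no limit as $E\uparrow a_k$ in $\Sigma$. Infinitely many gaps $G_l$ accumulate at $a_k$. Each of their factors blows up at the edges of $G_l$ other than $\mu_l$. It also dips to about $2\sqrt{\operatorname{dist}(\mu_l,\partial G_l)/\gamma_l}$ just outside the edge nearer to $\mu_l$.

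The pointwise profile is in fact false. Proposition~\ref{prop:failure} shows that for the AMO at $\theta=0$, $\operatorname*{ess\,inf}_{\Sigma\cap(a_k-r,a_k)}w_0(E)\sqrt{a_k-E}\to0$, although the edge coefficient is positive. This is exactly why the theorem is stated with density-one sets. What you need is convergence in measure, not continuity. The paper gets it from the exact identity \eqref{dens}. Isolating the pole at $a_k$ via $\bigl|\sqrt{[x+y]_+}-\sqrt{x}\bigr|\le\sqrt{|y|}$ gives the $L^1$ blow-up of Corollary~\ref{cor:L1}, and Chebyshev then gives density one. In your product formalism you would instead have to show that $\log R_k(E)-\log R_k(a_k)\to0$ in mean on $(a_k-r,a_k)$, summing one-gap logarithmic bounds over $l$.

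\textbf{Further steps are missing.}

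(i) You never combine the floor and the edge terms into the additive bound \eqref{eq:low-bound}, where a point may lie in infinitely many $U_{k,\theta,\pm}$. The paper writes $w_\theta=\frac12w_\theta+\frac12w_\theta$ and takes $d_k=\varepsilon_kh_k$, where $h_k$ is a $\theta$-uniform lower bound for the edge coefficients and $\sum_k\varepsilon_k\le\frac14$. Then the edge series is at most $\frac12w_\theta$. This needs the full floor $w_\theta\ge\frac1{2\pi}$, which your Weyl-function inequality does give once normalised. Your choice $d_k=c\sqrt{\ell_k}$ would need lower bounds on $(\mu_k-a_k)/\ell_k$ and on the cross product that are uniform in $k$ and $\theta$. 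You have neither; the cross product is only $\gtrsim\mathcal C_k^{-1}$.

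(ii) That the two single-site zeros cannot both sit at $a_k$ gives positivity only for each fixed $\theta$. The underlying reason would be that the edge solution cannot vanish at two consecutive sites, and linking this to the single-site coefficients needs proof. The paper instead gets $a_k<\xi_k(\theta)<b_k$ from the floor and weak homogeneity (Corollary~\ref{prop:density-one1}). A $\theta$-independent $h_k$ also requires continuity of the Dirichlet data in $\theta$ plus compactness of $\mathbb T$.

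(iii) In the upper bound, ``only the $k$-th factor blows up'' does not turn the product into a sum, since every point of $\Sigma$ is near infinitely many gaps. The paper uses partial fractions of $g_J^2$ (Lemma~\ref{lem:fin}). Within your expansion, take $m\in S$ minimising $d_l(E)=\operatorname{dist}(E,G_l)$. Then $d_l+d_m\ge\eta_{lm}$ lets you bound each monomial $\prod_{l\in S}\sqrt{\ell_l/d_l(E)}$ by $\sqrt{\ell_m/d_m(E)}\prod_{l\in S\setminus\{m\}}\sqrt{2\ell_l/\eta_{lm}}$.

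Also, your inputs do not bound $\sum_j\sqrt{\ell_j}\,\eta_{jk}^{-1/2}$ uniformly in $k$. The separation $\eta_{jk}\gtrsim|j-k|^{-2\tau}$ bounds the small-$|j|$ terms only by $|k|^{\tau}$, and $e^{C|k|^\tau}$ is not beaten by $e^{-c|k|/2}$ when $\tau>1$. You must estimate with $\log(1+x)$ rather than $x$. This gives $\log\mathcal C_k\lesssim\log^2(2+|k|)$ and $c_k\asymp\sqrt{\ell_k}\,\mathcal C_k$, not $C\sqrt{\ell_k}$.
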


The upper bound is a genuine almost-everywhere envelope on the whole
spectrum.  The density-one qualification cannot
in general be removed, even for the almost Mathieu operator.

\begin{proposition}\label{prop:failure}
Consider the almost Mathieu operator with $0<\lambda<1$ and
Diophantine frequency $\alpha$.  At the phase $\theta=0$, every open
gap $(a_k,b_k)$, $k\in\mathbb{Z}\backslash\{0\}$, satisfies
\begin{equation}\label{eq:extra}
\begin{aligned}
\lim_{r\downarrow0}
\operatorname*{ess\,inf}_{E\in\Sigma\cap(a_k-r,a_k)}
w_0(E)\left({a_k-E}\right)^{1/2}=0,\quad
\lim_{r\downarrow0}\operatorname*{ess\,inf}_{E\in\Sigma\cap(b_k,b_k+r)}
w_0(E)\left({E-b_k}\right)^{1/2}=0.
\end{aligned}
\end{equation}
\end{proposition}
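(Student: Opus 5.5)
We need to show that at $\theta=0$ the density $w_0$ vanishes, in an essential-infimum sense, relative to the square-root profile at each edge. We will use the reflectionless representation of the diagonal Green function,
\[
G_{00}(E+i0)=\frac{i}{\sqrt{\prod(\ldots)}}\cdots ,
\]
but more concretely Kotani/Deift--Simon theory. For a reflectionless operator with spectrum $\Sigma$ and gaps $(a_k,b_k)$, one has
\[
G_{nn}(z)=-\frac{1}{\sqrt{(z-b_0)(z-a_0)}}\prod_{k\ne0}\frac{z-\mu_k(n)}{\sqrt{(z-a_k)(z-b_k)}},
\]
where $\mu_k(n)\in[a_k,b_k]$ are the Dirichlet data. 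Hence $w_\theta=\frac12\bigl(\operatorname{Im}G_{00}+\operatorname{Im}G_{11}\bigr)/\pi$. Near the edge $a_k$ from the left, $\operatorname{Im}G_{nn}(E+i0)\approx C_k\,(\mu_k(n)-a_k)/\sqrt{a_k-E}$, with $C_k$ finite and positive. Therefore the coefficient of the square-root singularity of $w_0$ at $a_k$ is proportional to $(\mu_k(0)-a_k)+(\mu_k(1)-a_k)$. It vanishes exactly when the Dirichlet eigenvalues sit at the edge, in which case $w_0(E)\sqrt{a_k-E}\to0$ as $E\uparrow a_k$. That limit holds in the ordinary sense, which is stronger than the essential-infimum statement.

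So the plan is to show that the symmetric phase $\theta=0$ forces this degeneracy. Note, though, that the statement is about an ess inf, not a limit. This suggests the authors do not claim the residue vanishes. Instead, $w_0(E)\sqrt{a_k-E}$ oscillates, because the other gaps accumulate at $a_k$ (Cantor spectrum). Between $a_k$ and any point to its left there are infinitely many small gaps $(a_j,b_j)$.

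\textbf{Step 1.} At any gap edge $b_j$ approached from the right, with $\mu_j(n)=b_j$ (or, generally, at the edge where the Dirichlet datum sits), the product formula shows that $w_0(E)$ stays bounded, or even tends to $0$, as $E\downarrow b_j$. At $\theta=0$ the potential is even, $v(-n)=v(n)$, and this symmetry should be used to locate the Dirichlet data. Reflection symmetry $n\mapsto -n$ for site $0$ gives $m_+=m_-$, so that $G_{00}=-1/(2m_+ +E-v(0))$-type relations hold. The evenness forces the gap Dirichlet data $\mu_j(0)$, or the corresponding data for the $\{0,1\}$ pair (the symmetry $n\mapsto 1-n$ fails for $\theta=0$), to be at gap edges. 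For the almost Mathieu operator at $\theta=0$, the symmetry $n\mapsto -n$ makes $\delta_0$ even. Then $\mu_{0,0}$ only sees the even part of the spectrum, and the standard fact is that $\mu_j(0)\in\{a_j,b_j\}$ for every $j$.

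\textbf{Step 2.} Since $\mu_j(0)\in\{a_j,b_j\}$, on one side of each small gap $j$ the contribution of $\mu_{0,0}$ behaves like $\sqrt{|E-\text{edge}|}\to0$ rather than blowing up. Meanwhile $\mu_{0,1}$ contributes something bounded, by a factor controlled through Theorem~\ref{thm:qp-density}'s upper bound away from the $a_k$-singularity.

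\textbf{Step 3.} Choose small gaps $j$ with $a_j,b_j\to a_k^-$. Near the favourable edge of gap $j$, bound $w_0(E)\sqrt{a_k-E}$ by $\sqrt{a_k-E}$ times something bounded on a positive-measure set. Uniform control of the bounded part uses the summability $\sum c_j<\infty$ from the upper envelope in Theorem~\ref{thm:qp-density}. This makes the ess inf on $(a_k-r,a_k)$ at most $O(\sqrt r)\to0$.

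The main obstacle is Step 1 combined with controlling the $\mu_{0,1}$ half of $\nu_0$. Here I would actually redo the argument for $w_0$ directly via the trace formula: choose $E$ near an edge of gap $j$ where both Dirichlet factors vanish. If that fails, I would use the residue expansion to show that the product $\prod_i(E-\mu_i)/\sqrt{\cdots}$ is uniformly bounded on a positive-measure set near $b_j$ for $j$ large.
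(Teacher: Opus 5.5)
There is a genuine gap. Your Step 1 is correct, and it is exactly the symmetry input the paper uses. At $\theta=0$ the odd subspace reduces $H$ and is unitarily $H_+$, so $m_+=m_-=m$ is analytic across every gap and $g=M_{00}=1/(V(0)-z-2m)$ has no zero there; that is, the Dirichlet data of $\mu_{0,0}$ sit at gap edges. Your Step 3 is also the paper's final step.

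What is missing is the middle. You never produce, near each small gap accumulating at $a_k$, a subset of $\Sigma$ of positive measure on which $w_0\le C_0$ with $C_0$ \emph{independent of the gap}. The mechanism you propose for this step is false: there is no edge at which ``both Dirichlet factors vanish.''
\begin{itemize}
\item Eliminating $m$ from the Schur complement gives $M_{11}=\frac{(z-V(0))^2}{4}g-\frac{1}{4g}$.
\item At the edge carrying the Dirichlet datum of $g$, the function $g$ stays bounded, since it is monotone and of one sign on the gap.
\item Meanwhile $F$ diverges at both edges of every gap, by Corollary~\ref{prop:density-one1} together with $w_0\ge 1/(2\pi)$.
\item Hence $M_{11}=2F-g$ diverges exactly at your ``favourable'' edge.
\end{itemize}
So $\mu_{0,1}$ has a genuine square-root singularity there, and $\nu_0$ has positive residues at both edges of every gap.

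The upper envelope of Theorem~\ref{thm:qp-density} cannot rescue Step 2. It is unbounded near every gap edge. Combined with Corollary~\ref{cor:L1} (both hold for every phase), it only gives $w_0\sqrt{a_k-E}=O(1)$ on average over $(a_k-r,a_k)$, not $o(1)$ on a set of positive measure. The statement needs actual cancellation in \eqref{dens}. Also, a vanishing residue does not give $w\sqrt{a_k-E}\to0$ in the ordinary sense when gaps accumulate; only the $L^1$ statement is true.

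The missing idea is to anchor at the zero $\xi_l$ of $F$, which is the Dirichlet datum of $\nu_0$ and lies strictly inside $G_l$, rather than at the edges where $g$ degenerates.
\begin{itemize}
\item The relation $F=\frac{(z-V(0))^2+4}{8}g-\frac{1}{8g}$ forces $|g(\xi_l)|=((\xi_l-V(0))^2+4)^{-1/2}\asymp 1$.
\item $g$ is zero-free and of one sign on $G_l$, and nonreal off $\mathbb R$. So $\sqrt{g/g(\xi_l)}$ has positive real part on the disk of radius $\delta_l=\min\{\xi_l-a_l,b_l-\xi_l\}$.
\item Schwarz's lemma then gives $|g(z_l)|,|g(z_l)|^{-1}\le C$ at $z_l=\xi_l+i\delta_l/2$, hence $|F(z_l)|\le C$.
\item This yields $\nu_0(I_l)\le C_1\delta_l$ on the one-sided interval $I_l$ of length $\delta_l$ at the nearer edge.
\item Homogeneity of $\Sigma$ gives $|\Sigma\cap I_l|\ge c_1\delta_l$, and Chebyshev produces $K_l\subset\Sigma\cap I_l$ of measure at least $c_1\delta_l/2$ with $w_0\le C_0$.
\end{itemize}

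Your trace-formula idea can be stated correctly. On $\Sigma$, $\pi w_0=\frac{(E-V(0))^2+4}{8}|g(E+i0)|+\frac{1}{8|g(E+i0)|}$, so bounded density is equivalent to $|g(E+i0)|\asymp1$. But finding such sets uniformly in $l$ is exactly the step your proposal leaves open. It requires the distortion argument together with homogeneity, and homogeneity is what makes $C_0$ independent of $l$, which your Step 3 needs.
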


Thus the density exhibits the expected square-root profile on an
asymptotically full part of every one-sided edge neighborhood, but no
positive square-root lower bound can hold essentially uniformly on
the entire neighborhood. 
\begin{remark}
     We state Theorem~\ref{thm:qp-density} and Proposition \ref{prop:failure} for
the almost Mathieu operator because this is the setting of Avila's
question. Both the upper \eqref{eq:upp-bound} and lower envelopes \eqref{eq:low-bound} hold throughout the subcritical Diophantine regime,  while the evenness of the potential and the Cantor structure of the spectrum are extra input for \eqref{eq:extra}.
\end{remark}

\subsection{Reflectionless measures and spectral geometry}

Classical approaches to spectral continuity relate the growth of
solutions and transfer matrices to Weyl functions and subordinacy
theory \cite{ACO,AJ,GilbertPearson,JitomirskayaLast,JL2,Teschl,LiYouZhou}.  Our
viewpoint is different: we ask how the geometry of the spectrum
controls the concentration and Fourier decay of spectral measures.
We develop this mechanism within the class of reflectionless measures,
where a suitable Craig-type condition on the spectral gaps yields the
upper envelope and Fourier estimates directly, without recourse to
subordinacy theory.

For a finite positive measure $\mu$, let
$$
 F_\mu(z)=\int_{\mathbb R}\frac{d\mu(E)}{E-z},
 \qquad z\in\mathbb C^+,
$$
be its Borel transform.  We call $\mu$ reflectionless on a measurable
set $S\subset\mathbb R$ if
$\operatorname{Re}F_\mu(E+i0)=0$ for Lebesgue almost every $E\in S$.
For compact $S$, let $\mathcal R(S)$ denote the probability measures
supported on $S$ and reflectionless on $S$.

Reflectionless operators first appeared in one-dimensional scattering
and inverse spectral theory \cite{DeiftTrubowitz,Marchenko,Teschl}.
Kotani theory identifies reflectionless behavior on
zero-Lyapunov-exponent sets in ergodic families \cite{Kotani}, and
Remling's theorem identifies reflectionless right limits on the
absolutely continuous spectrum \cite{Remling}.  Sodin and Yuditskii
developed the inverse theory of almost periodic Jacobi matrices with
homogeneous spectrum \cite{SodinYuditskii}; see also \cite{GKT,Teschl}.
Absolute continuity under homogeneous and weakly homogeneous
hypotheses was studied further in
\cite{GesztesyYuditskii,PoltoratskiRemling}.  For the example of reflectionless measures with nonempty singular components, see \cite{NVY,Li}.
Reflectionless measures
also arise in potential theory: the equilibrium measure of a compact
set of positive capacity, or equivalently its harmonic measure at
infinity, is reflectionless \cite{PoltoratskiRemling}.

The relevance of gap geometry is especially clear in inverse spectral
theory.  For a fixed spectrum, Dirichlet data, together with their sign
or phase information, parametrize the remaining freedom in a
reflectionless operator.  Translation and nonlinear evolution induce
flows on these data.  With infinitely many gaps, one must control the
interaction of the corresponding coordinates.  Craig-type conditions
do so through gap lengths, gap separations, and products over all
other gaps \cite{Craig}.  This mechanism underlies the work of Binder,
Damanik, Goldstein, and Luki\'c on KdV \cite{BDGL}, which established
Deift's conjecture for a class containing small analytic
quasiperiodic data with Diophantine frequencies
\cite{DeiftProblems}.  Related results concern the Toda lattice and
uniqueness for the KdV hierarchy \cite{BDLV,LukicYoung}; Fillman, Li,
Luki\'c, and Zhou developed an analogous theory for the cubic
defocusing nonlinear Schr\"odinger equation \cite{FLLZ}, while Li and Luki\'c recently established the corresponding theory for the defocusing mKdV equation \cite{LL}.  These
theorems require quantitative spectral hypotheses, and the
unrestricted KdV conjecture was disproved by Chapouto, Killip, and
Vi\c{s}an \cite{CKV}.

Our use of gap geometry is different.  The inverse-spectral evolution
theory controls the motion of Dirichlet data on a fixed spectrum.  We
instead fix a reflectionless measure and vary the spectral parameter,
uniformly over the full family $\mathcal R(\Sigma)$.  Products of the
same general type now control the combined effect of infinitely many
square-root edge singularities.

Let
$$
\Sigma=[b_0,a_0]\setminus\bigcup_{j\in\mathcal K}G_j,
\qquad
G_j=(a_j,b_j),
\qquad
\gamma_j=b_j-a_j,\qquad \gamma_0=a_0-b_0
$$
where $|\Sigma|>0$, $\mathcal K\subset\mathbb Z\setminus\{0\}$ labels the open
gaps.   Let $\mathcal K_0=\mathcal{K}\cup\{0\}$ and
$$\eta_{j l}=\operatorname{dist}(G_j,G_ l),\quad \eta_{0,j}=\eta_{j,0}=\min\{a_j-b_0, a_0-b_j\}$$
be the distance of different gap.
The quantity
$$
\mathcal C_j=\prod_{\substack{ l\in\mathcal K_0\\  l\neq j}}
\left(1+\frac{\gamma_ l}{\eta_{j l}}\right)^{1/2}
$$
is the convenient version of the classical Craig interaction factor associated with $G_j$ appearing in infinite-gap inverse spectral theory and in
the Craig-type conditions
\cite{BDGL,BDLV,Craig,LukicYoung}. 
The geometric hypothesis used in this paper is the following
\emph{summable square-root Craig-type condition}:
\begin{equation}\label{eq:sqrt-craig}
 \sum_{j\in\mathcal K_0}
 \sqrt{\gamma_j}\,\mathcal C_j<\infty.
\end{equation}
It is adapted to the square-root singularities of reflectionless
densities.  Unlike the classical Craig conditions for Dubrovin flows,
\eqref{eq:sqrt-craig} controls variation in the spectral variable.

The exact partial-fraction expansion of the squared Borel transform is the basic structural identity behind the results below. It also gives the corresponding exact formula for the  density.
\begin{theorem}\label{thm:end}
Assume that \eqref{eq:sqrt-craig} holds. For every $\mu\in\mathcal R(\Sigma)$, there exist coefficients $0\leq \beta_{k,\pm}
\leq
C_*\sqrt{\gamma_k}\mathcal C_k$, $k\in\mathcal K_0$, such that
\begin{equation}\label{pf}
F_\mu(z)^2=\sum_{k\in\mathcal K_0}\bigg(\frac{\beta_{k,-}^2}{z-a_k}-\frac{\beta_{k,+}^2}{z-b_k}\bigg),\qquad z\in\mathbb C^+.
\end{equation}
The series converges absolutely and locally uniformly. Moreover, $\mu$ is absolutely continuous, and, writing $d\mu(E)=w_\mu(E)\,dE$,
\begin{equation}\label{dens}
\pi^2w_\mu(E)^2=\sum_{k\in\mathcal K_0}\bigg(\frac{\beta_{k,-}^2}{a_k-E}+\frac{\beta_{k,+}^2}{E-b_k}\bigg)
\end{equation}
for Lebesgue almost every $E\in\Sigma$. 
\end{theorem}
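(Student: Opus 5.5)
The plan is to combine the exponential Herglotz representation of $F_\mu$ with a finite-gap approximation. The residue bounds will come from an explicit product formula. Absolute continuity will come from an integrable majorant for $|F_\mu|$ that is uniform up to the real axis.

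\emph{Step 1: product formula.} Since $\mu\in\mathcal R(\Sigma)$, we have $\operatorname{Arg}F_\mu(E+i0)=\pi/2$ for a.e. $E\in\Sigma$. Off $\Sigma$, $F_\mu$ is real and strictly increasing on each gap, so it has at most one zero $\mu_j\in[a_j,b_j]$ in each $\overline{G_j}$. The exponential representation of $\log F_\mu$ therefore has argument function $\xi=\tfrac12$ on $\Sigma$, and $\xi\in\{0,1\}$ on each gap, jumping at $\mu_j$. This yields
$$
F_\mu(z)^2=\frac{1}{(z-b_0)(z-a_0)}\prod_{j\in\mathcal K}\frac{(z-\mu_j)^2}{(z-a_j)(z-b_j)},
$$
and the product converges locally uniformly off $\Sigma$ because $\sum\gamma_j<\infty$. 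Since $F_\mu(E-i0)=\overline{F_\mu(E+i0)}=-F_\mu(E+i0)$ on $\Sigma$, the function $F_\mu^2$ takes the same boundary value $-\pi^2w^2$ from both sides. This is why a meromorphic-type partial-fraction expansion is plausible.

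\emph{Step 2: truncation and residues.} Let $P_N$ be the product truncated to the gaps $j$ with $|j|\le N$. Then $P_N$ is a rational function with only simple poles at the edges $a_k,b_k$ for $|k|\le N$ and $k=0$, and $P_N=O(z^{-2})$ at infinity. Hence $P_N$ equals the sum of its principal parts exactly. Computing the residue at $a_k$ gives
$$
\frac{(a_k-\mu_k)^2}{a_k-b_k}\cdot\frac{1}{(a_k-b_0)(a_k-a_0)}\prod_{l\ne k}\frac{(a_k-\mu_l)^2}{(a_k-a_l)(a_k-b_l)} .
$$
For the sign, the first factor is negative, the $0$-factor is negative, and each other factor is positive because $a_k\notin\overline{G_l}$. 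So the residue is $\beta_{k,-}^2\ge0$, and similarly the residue at $b_k$ is $-\beta_{k,+}^2$. For the size, $(a_k-\mu_k)^2/\gamma_k\le\gamma_k$ and each factor is at most $(1+\gamma_l/\eta_{kl})$, using $|a_k-\mu_l|\le\operatorname{dist}(a_k,G_l)+\gamma_l$. This gives $\beta_{k,\pm}\le C_*\sqrt{\gamma_k}\,\mathcal C_k$ uniformly in $N$. The residues of $P_N$ converge to those of the full product. By \eqref{eq:sqrt-craig} (which gives $\sum\beta^2<\infty$, and even $\sum\beta<\infty$), dominated convergence lets $N\to\infty$ in the partial fractions locally uniformly on $\mathbb C^+$. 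This proves \eqref{pf} together with the absolute convergence.

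\emph{Step 3: absolute continuity and the density formula.} This is where I expect the real work to lie. From \eqref{pf},
$$
|F_\mu(x+i\varepsilon)|^2\le\sum_k\Big(\frac{\beta_{k,-}^2}{|x-a_k|}+\frac{\beta_{k,+}^2}{|x-b_k|}\Big)\le\Phi(x)^2,
\qquad
\Phi(x)=\sum_k\Big(\frac{\beta_{k,-}}{|x-a_k|^{1/2}}+\frac{\beta_{k,+}}{|x-b_k|^{1/2}}\Big).
$$
This bound is uniform in $\varepsilon>0$, and $\Phi$ is locally integrable: on a bounded window each term has integral $\lesssim\beta_k$, and $\sum_k\beta_k\lesssim\sum_k\sqrt{\gamma_k}\mathcal C_k<\infty$ by \eqref{eq:sqrt-craig}. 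Since $\pi^{-1}\operatorname{Im}F_\mu(x+i\varepsilon)\,dx\to\mu$ weakly and is dominated by $\pi^{-1}\Phi\,dx$ on compacts, we get $\mu\le\pi^{-1}\Phi\,dx$, so $\mu$ is absolutely continuous.

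Finally, $\Phi<\infty$ a.e., so the series in \eqref{dens} converges at a.e. $E$. Dominated convergence then passes $z=E+i\varepsilon\to E$ in \eqref{pf}, giving $F_\mu(E+i0)^2$ equal to the real series with the signs as in \eqref{dens}. On $\Sigma$, $F_\mu(E+i0)=i\pi w_\mu(E)$, so $F_\mu(E+i0)^2=-\pi^2w_\mu^2$, which is \eqref{dens} after the sign bookkeeping.

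The main care point is this last passage. One must check that the uniform majorant really controls all gaps at once, including edges accumulating at $E$, and that the Step 1 representation is valid with no singular part a priori. If Step 1 turns out to be delicate, I would instead work directly with the finite-gap measures given by the truncations $P_N$ and identify their limit with $\mu$.
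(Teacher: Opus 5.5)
Your proposal is correct and is essentially the paper's proof: you truncate the canonical product, read off the endpoint residues of the rational function $F_\mu^2$, use the one-gap bound to get $\beta_{k,\pm}\le C_*\sqrt{\gamma_k}\,\mathcal C_k$ uniformly in the truncation, pass to the limit by dominated convergence, and take boundary values term by term wherever the series in \eqref{dens} converges absolutely. The convergence of the residue products, which you assert without proof, follows from the two-sided bound on each factor together with $\sum_{l\ne k}\gamma_l/\eta_{kl}<\infty$, exactly as in the paper. The only other difference is the absolute-continuity step: you bound the Poisson extension $\pi^{-1}\operatorname{Im}F_\mu(x+i\varepsilon)$ by $\pi^{-1}\Phi$ directly from \eqref{pf}, whereas the paper bounds the finite-gap densities $w_n$ via \eqref{genv} and uses $\mu_n\rightharpoonup\mu$. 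Both routes work, and you need not exclude a singular part in advance, since the product representation holds for every $\mu\in\mathcal R(\Sigma)$.
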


Our hypothesis \eqref{eq:sqrt-craig} is a  variant of the classical Craig-type
conditions, built from the gap-interaction factors used in inverse
spectral theory \cite{Craig,BDGL,BDLV}.
It also has a geometric consequence: it implies weak homogeneity
of $\Sigma$ (Lemma~\ref{lem:weak-hom}).
Thus every reflectionless measure supported on $\Sigma$ is
absolutely continuous by Poltoratski-Remling \cite{PoltoratskiRemling}.
Our proof establishes absolute continuity independently, together
with quantitative density and Fourier estimates, through the
endpoint residue expansion ~\eqref{pf}.
 
Formula~\eqref{pf} is the main structural identity of the paper.
It replaces the canonical infinite product of reflectionless measure \eqref{prod}     by an additive expansion
over the spectral edges.  Here and below, $\beta_{k,\pm}=\beta_{k,\pm}(\mu)$ denotes the
residue associated with $\mu$, the same residues $\beta_{k,\pm}$ determine
the edge behavior of the density and control its Poisson
regularization.   
The coefficient bound in \eqref{dens} immediately turns the exact identity into a summable square-root envelope and yields the measurable-set estimate, as shown in Corollary \ref{thm:holder}. 
Moreover, as a direct consequence of \eqref{dens}, we have the following:

\begin{corollary}\label{cor:L1}
  Assume that \eqref{eq:sqrt-craig} holds.  For every fixed $k\in\mathcal K_0$, 
    \begin{equation}\label{L1}
\lim_{r\downarrow0}\sup_{\mu\in\mathcal R(\Sigma)}\|\sqrt{r}w_\mu(a_k-rs)-\frac{\beta_{k,-}}{\pi\sqrt s}\|_{L_1(0,1)}=0.
\end{equation}
The analogous assertion holds at $b_k$. 
\end{corollary}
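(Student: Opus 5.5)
We work directly from the density identity \eqref{dens} of Theorem~\ref{thm:end}, localized at the edge $a_k$; the edge $b_k$ is handled the same way. For $s\in(0,1)$ and small $r>0$ put $E=a_k-rs$. Multiplying \eqref{dens} by $r$ gives
\begin{equation*}
\pi^2 r\,w_\mu(a_k-rs)^2=\frac{\beta_{k,-}^2}{s}+R_{k,r}(s),
\end{equation*}
valid for a.e.\ $s$ with $a_k-rs\in\Sigma$. The remainder is
\begin{equation*}
R_{k,r}(s)=r\sum_{(l,\pm)\neq(k,-)}\frac{\beta_{l,\pm}^2}{|E-e_{l,\pm}|},
\end{equation*}
where $e_{l,-}=a_l$ and $e_{l,+}=b_l$. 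Every term of $R_{k,r}$ is nonnegative on $\Sigma$, because $a_l-E>0$ and $E-b_l>0$ there by the sign structure of the gaps. Off $\Sigma$ the density $w_\mu$ vanishes, so the set $\{s:a_k-rs\notin\Sigma\}$ has to be treated separately.

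Use the elementary inequality $|\sqrt{x+y}-\sqrt{x}|\le\min(\sqrt y,\,y/\sqrt{x})\le\sqrt y$ for $x,y\ge0$. On the part of $(0,1)$ where $a_k-rs\in\Sigma$ it gives
\begin{equation*}
\Bigl|\sqrt r\,w_\mu(a_k-rs)-\frac{\beta_{k,-}}{\pi\sqrt s}\Bigr|\le\frac1\pi\sqrt{R_{k,r}(s)}.
\end{equation*}
On the complementary set the integrand is just $\beta_{k,-}/(\pi\sqrt s)$. Its integral is at most $\pi^{-1}C_*\sqrt{\gamma_k}\,\mathcal C_k$ times $\int_{A_r}s^{-1/2}\,ds$, where $A_r=\{s\in(0,1):a_k-rs\notin\Sigma\}$. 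This bound is uniform in $\mu$.

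\textbf{Step 1: the gap part.} We need $\int_{A_r}s^{-1/2}\,ds\to0$, which follows once $|A_r|\to0$. Indeed, $\int_{A}s^{-1/2}\,ds\le 2\sqrt{|A|}$ by rearrangement. Moreover $|A_r|=|(a_k-r,a_k)\setminus\Sigma|/r$, and this tends to $0$ because gaps accumulating at $a_k$ from the left have relative density zero. This should follow from \eqref{eq:sqrt-craig}: summability of $\sqrt{\gamma_l}\,\mathcal C_l$ forces $\sum_{l:\,G_l\subset(a_k-r,a_k)}\gamma_l=o(r)$. Alternatively one can quote the weak homogeneity of Lemma~\ref{lem:weak-hom}, or Lebesgue density at the endpoint. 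Here is the argument I would try first. For such $l$ we have $\eta_{lk}\le r$, so the Craig factor satisfies $\mathcal C_l\ge(1+\gamma_k/r)^{1/2}\gtrsim\sqrt{\gamma_k/r}$. Combined with $\gamma_l\le r$ this gives $\gamma_l\le\sqrt r\sqrt{\gamma_l}\lesssim r\,\gamma_k^{-1/2}\sqrt{\gamma_l}\,\mathcal C_l$. Summing over $l$, the sum is $o(r)$, since the tail of a convergent series over gaps in shrinking neighborhoods tends to $0$.

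\textbf{Step 2: the remainder.} By Cauchy--Schwarz, $\int_0^1\sqrt{R_{k,r}}\,ds\le\bigl(\int_0^1R_{k,r}\,ds\bigr)^{1/2}$, where we integrate only over $\{s:a_k-rs\in\Sigma\}$ and treat each term as $\beta^2/|E-e|$. We split the terms into two groups.
\begin{itemize}
\item Edges with $\operatorname{dist}(e_{l,\pm},a_k)\ge\delta$. Each such term is at most $r\beta_{l,\pm}^2/(\delta-r)$, and the sum of the $\beta^2$ is finite. This uses $\beta_{l,\pm}^2\le C_*^2\gamma_l\mathcal C_l^2\le C\sqrt{\gamma_l}\,\mathcal C_l$, which holds since $\sqrt{\gamma_l}\,\mathcal C_l$ is bounded. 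So the contribution is $O(r/\delta)$, uniformly in $\mu$.
\item The edge $b_k$, and edges within distance $\delta$ of $a_k$. The edge $b_k$ is at distance $\gamma_k$ and is handled as in the first group. For the others, integrate in $s$: $\int_0^1 r\,|a_k-rs-e|^{-1}\,ds=\int_{a_k-r}^{a_k}|E-e|^{-1}\,dE$, restricted to $\Sigma$. This integral is logarithmic, which is not good enough term by term.
\end{itemize}
Instead I would bound $\sqrt{R}$ termwise, using $\sqrt{\sum}\le\sum\sqrt{}$. Then
\begin{equation*}
\int_0^1\sqrt{r}\,\beta\,|a_k-rs-e|^{-1/2}\,ds\le 2\beta,
\end{equation*}
so the contribution of the near group is at most $\frac{2}{\pi}\sum_{\text{near}}\beta_{l,\pm}\le C\sum_{\operatorname{dist}(G_l,a_k)<\delta}\sqrt{\gamma_l}\,\mathcal C_l$. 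This tends to $0$ as $\delta\to0$ by \eqref{eq:sqrt-craig}, uniformly in $\mu$ because the bound on $\beta$ is uniform. The same termwise square-root bound also handles the far group, giving $\sum_{\text{far}}\beta\sqrt{r/(\delta-r)}$.

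\textbf{Conclusion.} Choose $\delta$ small and then $r\ll\delta$; this proves \eqref{L1}. I expect the main obstacle to be Step 1, showing that $\Sigma$ has density one at $a_k$ from the left in a quantitative form. The rest reduces to the uniform coefficient bound $\beta_{k,\pm}\le C_*\sqrt{\gamma_k}\,\mathcal C_k$ and the summability in \eqref{eq:sqrt-craig}.
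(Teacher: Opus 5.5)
Your argument is correct once one false claim is repaired. It then differs from the paper's only in how the points outside $\Sigma$ are handled.

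\textbf{The false claim.} The remainder in \eqref{dens} is not termwise nonnegative on $\Sigma$. For $E\in\Sigma$ and a gap $G_l$, the point $E$ lies on one side of $G_l$. Hence exactly one of $\beta_{l,-}^2/(a_l-E)$ and $\beta_{l,+}^2/(E-b_l)$ is $\ge 0$, and the other is $\le 0$; only the pair $l=0$ is nonnegative on $\Sigma$. So the correct identity is $\pi^2 r\,w_\mu(a_k-rs)^2=\beta_{k,-}^2/s+\widetilde R_{k,r}(s)$, where $\widetilde R_{k,r}$ is signed and satisfies $|\widetilde R_{k,r}|\le R_{k,r}$. The inequality you need is $|\sqrt{[x+y]_+}-\sqrt{x}|\le\sqrt{|y|}$ for $x\ge0$ and real $y$. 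With it, your pointwise bound $\pi^{-1}\sqrt{R_{k,r}}$ and all of Step 2 survive unchanged. One small correction in Step 2: the termwise integral is bounded by $2\sqrt2\,\beta$, not $2\beta$, when the edge lies inside $(a_k-r,a_k)$; this is harmless. Your $\delta$-split is an explicit version of the dominated convergence argument in the paper.

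\textbf{Comparison with the paper: the gap set $A_r$.} The paper notes that $F_\mu$ is real on $\mathbb R\setminus\Sigma$. Then \eqref{pf} gives $\pi^2 w_\mu^2=\bigl[\sum_k(\cdots)\bigr]_+$ almost everywhere on all of $\mathbb R$. Applying the signed inequality on the whole interval $(0,r)$ therefore also covers the points outside $\Sigma$, and no density statement about $\Sigma$ is needed. Your Step 1 instead proves directly that $\Sigma$ has left density one at $a_k$.

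Your Craig-factor argument does this, but it must also cover a gap straddling $a_k-r$, not only gaps contained in $(a_k-r,a_k)$. Every $l$ with $G_l\cap(a_k-r,a_k)\neq\emptyset$ has $\eta_{lk}<r$, hence $\mathcal C_l\ge(\gamma_k/r)^{1/2}$. Then
$|G_l\cap(a_k-r,a_k)|\le\min\{\gamma_l,r\}\le\sqrt{\gamma_l r}\le r\gamma_k^{-1/2}\sqrt{\gamma_l}\,\mathcal C_l$.
Summing over such $l$ gives $o(r)$, because each $\eta_{lk}>0$ and the series in \eqref{eq:sqrt-craig} converges.

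The alternatives you mention would not suffice. Lemma~\ref{lem:weak-hom} gives only a two-sided $\limsup\ge 1/2$, and Lebesgue's density theorem says nothing at a prescribed point such as $a_k$.

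What each route buys: yours makes the geometric fact explicit and quantitative. The paper's is shorter and in fact yields that fact as a by-product. Choosing $\mu$ with $\beta_{k,-}(\mu)>0$ in \eqref{L1} forces $\int_{A_r}s^{-1/2}\,ds\to0$, since $w_\mu=0$ on $A_r$.
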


Corollary~\ref{cor:L1} is a
uniform $L^1$ blow-up theorem: after rescaling an $r$-neighborhood
of $a_k$ to the fixed interval $(0,1)$, the density converges to the
square-root profile
$\frac{\beta_{k,-}(\mu)}{\pi\sqrt{s}},
$
uniformly over the reflectionless family $\mathcal R(\Sigma)$. 
The conclusion is  an $L^1$ statement rather than a
pointwise one.  It allows exceptional sets accumulating at the edge,
as is unavoidable for spectra with infinitely many gaps (as shown in  Proposition \ref{prop:failure}), while still
determining the complete tangent profile and the exact leading
asymptotics of the spectral mass.  Moreover, if 
$\beta_{k,-}(\mu)>0,$
then one  turns \eqref{L1} into a square-root lower
envelope on a set of density one at $a_k$, as shown in Corollary \ref{prop:density-one1}.

Another consequence of Theorem~\ref{thm:end} concerns Fourier decay.
For absolutely continuous measures, classical estimates obtain decay rates from
regularity of the density, through 
oscillatory integral estimates \cite{Stein}. Power-law singularities
require separate analysis and can determine the leading Fourier
asymptotics \cite{ACOPower}. Without additional assumptions,
absolute continuity is compatible with the failure of every
positive-power decay rate \cite{ACOSlow}.
For singular continuous measures, even convergence to zero is an
additional property, usually called the Rajchman property.
Classical constructions of Salem and Kaufman provide singular
measures with power-law Fourier decay, revealing a close connection
between Fourier decay and the geometry and arithmetic of the
support \cite{Salem,Kaufman,Mattila}.
More recent work establishes decay for measures defined by
dynamical systems. Jordan and Sahlsten proved polynomial decay
for a class of Gibbs measures for the Gauss map, using oscillatory
estimates and large-deviation control \cite{JordanSahlsten}.
Bourgain and Dyatlov obtained polynomial decay for
Patterson-Sullivan measures by combining nonlinearity with
discretized sum-product estimates \cite{BourgainDyatlov}.
Li established Fourier decay for Furstenberg measures associated
with random walks on split semisimple Lie groups \cite{LiFourier}.
These results extract cancellation from arithmetic or dynamical
structure, without requiring a smooth density.

Pointwise decay must also be distinguished from decay after time
averaging. Wiener's theorem detects atoms through averaged squared
Fourier transforms, while the estimates of Strichartz and Last
relate quantitative time averages to the concentration of the
measure \cite{Wiener,Strichartz,Last}.
Our argument uses the residue identity \eqref{pf} to address both
questions: it controls the error of Poisson smoothing and hence
pointwise Fourier decay, while the endpoint residues determine
the leading term of the time-averaged squared Fourier transform.

\begin{theorem}\label{thm:fourier}
Assume that \eqref{eq:sqrt-craig} holds and let $\mu\in\mathcal R(\Sigma)$. Then
\begin{equation}\label{Fdecay}
 |\widehat\mu(t)|\le \frac{16}{\pi(1-e^{-1})}
\bigg(\sum_{k\in\mathcal K_0}
(\beta_{k,-}+\beta_{k,+})\bigg)
|t|^{-1/2},\quad |t|\geq 1,
\end{equation}
\begin{equation}\label{sharp}
    \limsup_{|t|\to\infty}|t|^{1/2}|\widehat\mu(t)| \ge\frac1{\sqrt\pi}\bigg(\sum_{k\in\mathcal K_0}\bigl(\beta_{k,-}^2+\beta_{k,+}^2\bigr)\bigg)^{1/2},
\end{equation} 

and$$
\lim_{T\to\infty}\frac{1}{\log T}\int_0^T|\widehat\mu(t)|^2\,dt=\frac1\pi\sum_{k\in\mathcal K_0}\bigl(\beta_{k,-}^2+\beta_{k,+}^2\bigr).
$$
\end{theorem}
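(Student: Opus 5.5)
Everything will be read off from Theorem~\ref{thm:end}. Write $\beta:=\sum_{k\in\mathcal K_0}(\beta_{k,-}+\beta_{k,+})<\infty$, which is finite by the coefficient bound and \eqref{eq:sqrt-craig}.

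\textbf{Upper bound \eqref{Fdecay}.} I would not Fourier-transform the density directly, since $w_\mu$ is only known through \eqref{dens}. Instead I would use Poisson smoothing at scale $\varepsilon=1/|t|$. The starting identity is
$$\widehat\mu(t)e^{-\varepsilon|t|}=\int e^{-itE}\,\tfrac1\pi\operatorname{Im}F_\mu(E+i\varepsilon)\,dE .$$
Take $t>0$ and shift the contour: the integral equals $\frac{1}{2\pi i}\int e^{-itE}\bigl(F_\mu(E+i\varepsilon)-\overline{F_\mu(E+i\varepsilon)}\bigr)dE$. One piece can be pushed down to $-i\infty$ and vanishes. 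The other piece $\int e^{-itE}F_\mu(E+i\varepsilon)\,dE$ is where the work is.

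To control it, I would use that $F_\mu^2$ is an absolutely convergent sum of simple poles by \eqref{pf}. Then $|F_\mu(z)|\le\sum_k(\beta_{k,-}|z-a_k|^{-1/2}+\beta_{k,+}|z-b_k|^{-1/2})$, obtained from $\sqrt{\sum x_k}\le\sum\sqrt{x_k}$.

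A cleaner route is to write $\operatorname{Im}F_\mu(E+i\varepsilon)=\pi(P_\varepsilon * w_\mu)(E)$ and bound $|\widehat\mu(t)|\le e\,|\widehat{P_\varepsilon*\mu}(t)|$. Then split $w_\mu\le\frac1\pi\sum_k\bigl(\beta_{k,-}(a_k-E)_+^{-1/2}+\beta_{k,+}(E-b_k)_+^{-1/2}\bigr)$. The catch is that this is only an envelope, and the oscillatory integral needs the actual function.

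So I would instead integrate by parts once in $E$ against $F_\mu(E+i\varepsilon)$. The derivative is $F_\mu'=(F_\mu^2)'/(2F_\mu)$, and this is the step where the residue form must be used. Alternatively, estimate $\int_{\mathbb R}|F_\mu(E+i\varepsilon)-F_\mu(E+i\varepsilon')|\,dE$ dyadically. Each square-root term contributes at most $C\beta_{k,\pm}\varepsilon^{1/2}$ to the $L^1$ difference between heights $\varepsilon$ and $2\varepsilon$. Summing dyadically to height $0$ gives $\|\mu-P_\varepsilon*\mu\|_{TV}\lesssim\beta\sqrt\varepsilon$.

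Combining with $|\widehat{P_\varepsilon*\mu}(t)|=e^{-\varepsilon|t|}|\widehat\mu(t)|$ gives $(1-e^{-1})|\widehat\mu(t)|\lesssim\beta|t|^{-1/2}$ at $\varepsilon=1/|t|$, which explains the constant's shape. The main obstacle is making this $L^1$ Poisson-error estimate rigorous from \eqref{pf} alone. For that I would bound $|\partial_yF_\mu(x+iy)|=|F'_\mu|$ using $|F_\mu^2{}'|\le\sum\beta^2|z-c|^{-2}$ together with a lower bound on $|F_\mu|$ coming from $\operatorname{Im}F_\mu^2$ positivity, treating each term separately.

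\textbf{Limit and lower bound.} For the averaged statement I would use Parseval-type identities. First, $\int_0^T|\widehat\mu|^2$ is comparable to $\int e^{-2t/T}|\widehat\mu(t)|^2dt$. Second, $\int_0^\infty e^{-2\varepsilon t}|\widehat\mu(t)|^2dt=2\pi\int|P_\varepsilon*\mu|^2\cdot(\text{kernel})$, which equals $\pi\int|\operatorname{Im}F_\mu(E+i\varepsilon)|^2dE/\pi^2$ up to normalization. By \eqref{dens} and the rescaled profiles (Corollary~\ref{cor:L1}), each edge $c$ with residue $\beta$ contributes $\int\frac{\beta^2}{\pi^2|E-c|}$ cut off at scale $\varepsilon$, which is $\frac{\beta^2}{\pi^2}\log(1/\varepsilon)(1+o(1))$. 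Summation is justified by the summable square-root bounds and dominated convergence. This gives the Abel-mean asymptotic $\frac1\pi\sum(\beta_{k,-}^2+\beta_{k,+}^2)\log T$.

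A Tauberian step, valid because the integrand is nonnegative and the target is slowly varying (Karamata), converts the Abel mean to the Cesàro limit stated. Finally, \eqref{sharp} follows by contradiction: if $|t|^{1/2}|\widehat\mu(t)|\le c'<$ RHS for large $|t|$, then $\int_0^T|\widehat\mu|^2\le c'^2\log T+O(1)$, contradicting the limit.
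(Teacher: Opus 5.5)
\textbf{Upper bound \eqref{Fdecay}: the core estimate is missing.} Your architecture is right: Poisson smoothing at height $y=1/|t|$ with $\widehat{u_y}=e^{-y|t|}\widehat\mu$, an $L^1$ bound on $w_\mu-u_y$ of order $\sqrt y\sum_k(\beta_{k,-}+\beta_{k,+})$, an Abelian asymptotic plus Karamata, and the contradiction argument for \eqref{sharp}. The last of these matches the paper. But the decisive step, the $L^1$ Poisson error, is exactly what you leave open, and the route you suggest for it fails.

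\begin{itemize}
\item $F_\mu$ itself has no additive decomposition into square-root terms; only $F_\mu^2$ decomposes. The bound $|F_\mu|\le\sum\beta|z-c|^{-1/2}$ controls sizes, not differences. So ``each square-root term contributes $C\beta\sqrt\varepsilon$ between heights $\varepsilon$ and $2\varepsilon$'' needs a derivative bound you do not have.
\item That derivative bound, via $F_\mu'=(F_\mu^2)'/(2F_\mu)$, needs a lower bound on $|F_\mu|$ that does not exist. The quantity $\operatorname{Im}(F_\mu^2)=2\operatorname{Re}F_\mu\operatorname{Im}F_\mu$ has no sign: in \eqref{pf} the $a_k$-poles give negative and the $b_k$-poles positive imaginary parts.
\item Moreover $|F_\mu(x+iy)|=O(y)$ above each Dirichlet zero $\xi_k$ inside a gap, while the termwise bound $\sum_l\beta_l^2|z-c_l|^{-2}$ on $|(F_\mu^2)'|$ stays of order one there. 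So the quotient cannot be controlled pole by pole.
\end{itemize}

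The missing idea is to avoid derivatives altogether. At points where $w_\mu>u_y$,
\[
\pi^2(w_\mu-u_y)^2\le\pi^2(w_\mu^2-u_y^2)=\pi^2w_\mu^2+\operatorname{Re}\bigl(F_\mu(x+iy)^2\bigr)-\bigl(\operatorname{Re}F_\mu(x+iy)\bigr)^2\le\pi^2w_\mu^2+\operatorname{Re}\bigl(F_\mu(x+iy)^2\bigr).
\]
The uncontrolled $(\operatorname{Re}F_\mu)^2$ thus enters with a favorable sign. Combining \eqref{dens} with the real part of \eqref{pf} term by term leaves only the explicit kernels $-y^2\beta^2/(d(d^2+y^2))$, with $d=x-c$. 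Their square roots integrate to at most $8\beta\sqrt y$. Conservation of mass, $\int u_y=\int w_\mu=1$, then upgrades the one-sided bound to $\|w_\mu-u_y\|_{L^1}\le\frac{16}{\pi}\sqrt y\sum_k(\beta_{k,-}+\beta_{k,+})$. This is where the constant in \eqref{Fdecay} comes from.

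\textbf{Averaged limit: the heuristic is right, but the cited tools do not prove it.} This gap is less serious.

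\begin{itemize}
\item \eqref{dens} is a signed sum, so the edges do not contribute separately to $\int u_y^2$.
\item Corollary~\ref{cor:L1} is an $L^1$ statement. It does not control the $L^2$ norm of the smoothed density across all scales between $y$ and $1$.
\item The natural dominating function, $\pi^2u_y^2\le|F_\mu|^2=|F_\mu^2|\le\sum_k\beta_k^2|x+iy-c_k|^{-1}$, counts both sides of each edge. It therefore overshoots the answer by a factor $2$.
\end{itemize}

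The paper removes exactly this factor with two identities. Pointwise, $2(\operatorname{Im}F_\mu)^2=|F_\mu^2|-\operatorname{Re}(F_\mu^2)$. By Cauchy's theorem, $\int_{\mathbb R}F_\mu(x+iy)^2\,dx=0$. Together these give $\int(\operatorname{Im}F_\mu)^2=\frac12\int|F_\mu^2|$. This last integral is then computed from \eqref{pf} by splitting off finitely many poles, each contributing $2\beta^2\log y^{-1}+O(1)$, and a tail of size $O(\varepsilon\log y^{-1})$. From there your Karamata step and your proof of \eqref{sharp} go through as in the paper.
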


Theorem \ref{thm:fourier} shows  the endpoint residues $\beta_{k,\pm}$ also  determines the leading coefficient of the time-averaged return probability. This reveals the fact that local edge data can determine the long-time average.

\subsection{Difficulties and ideas of the proofs}

As we mentioned, in the quasiperiodic setting, classical approaches   combine dynamical estimates (mainly from almost reducibility of the Schr\"odinger cocycle) with subordinacy
theory to control spectral concentration
\cite{JitomirskayaLast,JL2,AJ,LiYouZhou}.
Fourier estimates require further control of variation in energy.
The integrated density of states is encoded by the rotation number,
a conjugacy invariant up to an explicit degree correction, and can
be tracked through almost reducibility \cite{BambusiZhao,Z,ZZ}.
Local spectral measures are not conjugacy invariants: even after
reduction to a constant rotation, their densities depend on the
conjugating matrix \cite{GK23}.
Thus approximating the reduced dynamics does not automatically
control the energy dependence of the original spectral weights.
We use cocycle theory (quantitative almost
reducibility) to verify the gap geometry \cite{AJAR,LYZZ}, and
reflectionlessness and the residue identity to control the
densities and their Fourier transforms directly.

The structural simplification is to square the reflectionless
product. Lemma~\ref{lem:fin} then replaces multiplicative gap
interactions \eqref{prod} by an additive expansion over the endpoints \eqref{pf}.
The square-root Craig-type condition \eqref{eq:sqrt-craig} controls the square roots
of the absolute residues, allowing this expansion to survive
infinitely many gaps in Theorem~\ref{thm:end}.
Discarding the signs in \eqref{dens} gives the upper envelope
of Corollary~\ref{thm:holder}; isolating one pole gives the
uniform $L^1$ profile of Corollary~\ref{cor:L1}.
The latter identifies the leading singularity without excluding
cancellation on smaller exceptional sets.

A positive lower profile requires more: an endpoint residue $\beta_{k,\pm}$
may vanish for a general reflectionless measure (as the Dirichlet date might lies in the gap edges).
For the canonical Schr\"odinger measure, the density floor
$w_\theta(E)\geq\frac1{2\pi}$ \cite{AvilaAMO} and weakly homogeneous (Lemma \ref{lem:weak-hom}) force the Borel transform to
diverge with opposite signs at the gap edges.
Its zero stays strictly inside the gap, giving phase-uniform
positivity of the endpoint coefficients for each fixed gap, and
the integrated profile then yields the density-one lower bound
of Corollary~\ref{prop:density-one1}.

The signed expansion (Theorem~\ref{thm:end}) also suggests the obstruction to a full
one-sided lower bound. Dirichlet zeros are exact cancellation
points, but lie outside the spectrum. The key is to transfer
this balance to a spectral set of positive measure.
For even potentials, the symmetry 
turns a canonical Dirichlet zero into a point where the
single-site Green function is controlled above and away from
zero. Analytic distortion and homogeneity of the spectrum \cite{LYZZ} convert this control
into a positive-measure set of bounded density nearby.
Accumulating gaps produce such sets at every edge scale
(Proposition~\ref{cancel}).
In \eqref{dens}, the remaining poles must therefore cancel
the fixed edge contribution on these sets.
Their relative measure tends to zero, explaining their
compatibility with the density-one profile.

For pointwise Fourier decay, Theorem~\ref{thm:fourier}
uses Poisson regularization rather than differentiation across
accumulating singularities.
Writing $u_y=\pi^{-1}\operatorname{Im}F_\mu(\,\cdot+iy)$,
the decisive comparison is
$$
 \pi^2(w_\mu(x)-u_y(x))_+^2 \le \left|
 \pi^2w_\mu(x)^2+ \operatorname{Re}\bigl(F_\mu(x+iy)^2\bigr) \right|.
$$
Positivity makes the uncontrolled real part enter with a
favorable sign. The residue identities now reduce the error
to explicit pole kernels; conservation of mass turns the
one-sided comparison into the full $L^1$ estimate.
This supplies the fractional regularity needed for
inverse-square-root Fourier decay \cite{Stein}.
It is information furnished by the identity \eqref{pf}, not by the
upper envelope alone.

For the exact average, the crucial observation is instead
$|F_\mu|^2=|F_\mu^2|$: the quadratic quantity in Plancherel's
theorem can be read from the additive pole expansion.
Each endpoint contributes a logarithmic divergence, and
summability controls the contribution of the accumulating
tail. Karamata’s Tauberian theorem yields the exact asymptotic
in Theorem~\ref{thm:fourier}.
Its positive coefficient proves optimality of the one-half
exponent. Thus the familiar connection between density
singularities and Fourier asymptotics \cite{ACOPower}
extends here to infinitely many interacting edges, with
the residues determining the leading coefficient.

\section{Endpoint residue representation of reflectionless measures}

\subsection{Reflectionless measures and finite-gap approximation}
\label{subsec:reflectionless}

We first recall basis of reflectionless measures. 
Let $\Sigma\subset\mathbb R$ be a compact set of positive
Lebesgue measure, with
$b_0=\min\Sigma,
a_0=\max\Sigma.$
Write
$$
\Sigma=[b_0,a_0]\setminus\bigcup_{k\in\mathcal K}G_k,
\qquad
G_k=(a_k,b_k),
\qquad
\gamma_k=b_k-a_k,
$$
where the $G_k$ are the bounded components of
$\mathbb R\setminus\Sigma$. The index set $\mathcal K$ is at most countable and need not coincide with $\mathbb N^+$. Thus, even in the Schr\"odinger setting, we do not assume that all labeled spectral gaps are open.

For a probability measure $\mu$ supported on $\Sigma$, define
its Cauchy transform by
$$
g_\mu(z)=\int_{\mathbb R}\frac{d\mu(E)}{E-z},
\qquad z\in\mathbb C^+.
$$
This is a Herglotz function, normalized by
$g_\mu(z)=-z^{-1}+O(z^{-2})$ at infinity.
We call $\mu$ \emph{reflectionless on $\Sigma$} if
$$
\operatorname{Re}g_\mu(E+i0)=0
\quad\text{for Lebesgue almost every }E\in\Sigma,
$$
and denote the class of such probability measures by
$\mathcal R(\Sigma)$.
Here $g_\mu(E+i0)$ denotes the nontangential boundary value.
We use the Lebesgue almost-everywhere convention throughout;
see \cite{PoltoratskiRemling}.

The exponential representation of Herglotz functions gives
the standard parametrization of $\mathcal R(\Sigma)$ by
Dirichlet data
$\boldsymbol\xi=(\xi_k)_{k\in\mathcal K},
\xi_k\in[a_k,b_k]:$
\begin{equation}
\label{prod}
g_\mu(z)=
-\frac{1}{\sqrt{z-b_0}\sqrt{z-a_0}}\prod_{k\in\mathcal K}
\frac{z-\xi_k}{\sqrt{z-a_k}\sqrt{z-b_k}},
\qquad z\in\mathbb C^+.
\end{equation}
Each square root is the principal branch.
This representation is standard in the inverse spectral
theory of reflectionless operators; see
\cite{SodinYuditskii}.

To recall why the parameters arise, $g_\mu$ is real and
strictly increasing on each gap:
$$
g_\mu'(E)=\int_\Sigma\frac{d\mu(x)}{(x-E)^2}>0,
\qquad E\in G_k.
$$
Thus it has at most one zero in $G_k$, which is $\xi_k$
when such a zero exists.
If $g_\mu$ is positive throughout $G_k$, set $\xi_k=a_k$;
if it is negative throughout $G_k$, set $\xi_k=b_k$.
The boundary argument of $g_\mu$ equals $\pi/2$ almost
everywhere on $\Sigma$, and equals $\pi$ and $0$ on
$(a_k,\xi_k)$ and $(\xi_k,b_k)$, respectively.
These boundary values determine the exponential
representation, while the normalization at infinity fixes
the constant in \eqref{prod}.

We will approximate \eqref{prod}
by finite products.
Assume, for notation, that $\mathcal K$ is infinite and
enumerate it as $\kappa_1,\kappa_2,\ldots$.
For $N\ge0$, set
$\Sigma_N=[b_0,a_0]\setminus\bigcup_{j=1}^N G_{\kappa_j}$
and
\begin{equation*}
g_N(z)=-\frac{1}{\sqrt{z-b_0}\sqrt{z-a_0}}
\prod_{j=1}^N\frac{z-\xi_{\kappa_j}}
{\sqrt{z-a_{\kappa_j}}\sqrt{z-b_{\kappa_j}}}.
\end{equation*}
The empty product is understood to be one.
Each $g_N$ is the Cauchy transform of a unique probability
measure $\mu_N$ supported on $\Sigma_N$. Under the gap-separation condition,
its boundary values give
$d\mu_N(E)=w_N(E)\,dE,$
where
\begin{equation*}
w_N(E)=\frac{\mathbf1_{\Sigma_N}(E)}
{\pi\sqrt{(E-b_0)(a_0-E)}}\prod_{j=1}^N\frac{|E-\xi_{\kappa_j}|}
{\sqrt{|E-a_{\kappa_j}|\,|E-b_{\kappa_j}|}}
\end{equation*}
almost everywhere.
In particular, every finite-gap approximant is absolutely
continuous.

Moreover,
$g_N\longrightarrow g_\mu$ locally uniformly in $\mathbb C^+$, and consequently 
$\mu_N\rightharpoonup\mu.$
Weak convergence alone does not preserve absolute
continuity \cite{NVY,Li}. Thus no absolute continuity of $\mu$ is assumed
at this stage; it will follow from Craig-type conditions established below.

 \subsection{Endpoint residue representation}

For any $\mu\in\mathcal R(\Sigma)$, our main observation is that, 
for finite-gap approximants, squaring the canonical product  \eqref{prod} 
produces a rational function whose only possible poles are
simple poles at the spectral endpoints.  Under the summability
assumption, the corresponding partial-fraction expansions
converge to an exact endpoint residue representation for
$F_\mu^2$, with no additional analytic term.

The proof is based on the following finite-gap identity. Its uniform form will also be used in the Fourier estimate.

\begin{lemma}\label{lem:fin}
Let $J\subset\mathcal K$ be finite and set
$$
g_J(z)=-\frac{1}{\sqrt{z-b_0}\sqrt{z-a_0}}\prod_{k\in J}\frac{z-\xi_k}{\sqrt{z-a_k}\sqrt{z-b_k}}.
$$
There exist nonnegative coefficients $\beta_{k,\pm}^J$, $k\in J\cup\{0\}$, such that
\begin{equation}\label{pfJ}
g_J(z)^2=\sum_{k\in J\cup\{0\}}\bigg(\frac{(\beta_{k,-}^J)^2}{z-a_k}-\frac{(\beta_{k,+}^J)^2}{z-b_k}\bigg),
\end{equation}
and
\begin{equation}\label{genv}
|g_J(z)|\leq C_*\sum_{k\in J\cup\{0\}}\sqrt{\gamma_k}\mathcal C_k\bigg(\frac{1}{\sqrt{|z-a_k|}}+\frac{1}{\sqrt{|z-b_k|}}\bigg).
\end{equation}
The constants are independent of $J$ and of the Dirichlet data.
\end{lemma}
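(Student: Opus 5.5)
The plan is to exploit that $g_J^2$ is a rational function. Squaring removes all the square roots and gives
$$
g_J(z)^2=\frac{1}{(z-b_0)(z-a_0)}\prod_{k\in J}\frac{(z-\xi_k)^2}{(z-a_k)(z-b_k)}.
$$
Its numerator has degree $2|J|$ and its denominator degree $2|J|+2$, so $g_J^2=O(z^{-2})$ at infinity and the partial-fraction decomposition has no polynomial part. The endpoints $a_k,b_k$, $k\in J\cup\{0\}$, are pairwise distinct. If two gaps had closures meeting, or $\eta_{jl}=0$, then the right side of \eqref{genv} would be infinite and there would be nothing to prove, so I will assume the endpoints are distinct. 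The only poles are then simple poles at the endpoints, a zero $\xi_k\in\{a_k,b_k\}$ merely giving residue zero. Hence $g_J^2=\sum_k\bigl(R_{k,-}/(z-a_k)+R_{k,+}/(z-b_k)\bigr)$ with $R_{k,-}=\lim_{z\to a_k}(z-a_k)g_J(z)^2$, and similarly for $R_{k,+}$. The first step is to fix the signs $R_{k,-}\ge0$ and $R_{k,+}\le0$, which lets me set $\beta^J_{k,\pm}=|R_{k,\pm}|^{1/2}$ and obtain \eqref{pfJ}.

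For the signs I will use the boundary behaviour of $g_J$ on $\mathbb R$. On the interior of $\Sigma_J$ the argument of $g_J$ is $\pi/2$, so $g_J^2<0$ there. On a gap $G_k$, and outside $[b_0,a_0]$, $g_J$ is real, so $g_J^2\ge0$. Near $a_k$ we have $g_J^2\approx R_{k,-}/(E-a_k)$, which must be $\ge0$ for $E$ slightly to the right of $a_k$, inside $G_k$ (or to the right of $a_0$ when $k=0$); hence $R_{k,-}\ge0$. Symmetrically, at $b_k$ the gap lies to the left, where $E-b_k<0$ and $g_J^2\ge0$, so $R_{k,+}\le0$. One can also read this off directly from the explicit product, but the argument-based route is cleaner.

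The next step is to bound the residues by direct evaluation. For $k\in J$,
$$
(\beta^J_{k,-})^2=\frac{(a_k-\xi_k)^2}{\gamma_k}\cdot\frac{1}{(a_k-b_0)(a_0-a_k)}\prod_{l\in J\setminus\{k\}}\frac{(a_k-\xi_l)^2}{|a_k-a_l|\,|a_k-b_l|}.
$$
Since $\xi_l\in[a_l,b_l]$, the quantity $|a_k-\xi_l|$ lies between $|a_k-a_l|$ and $|a_k-b_l|$. Each factor is therefore at most $\max(|a_k-b_l|,|a_k-a_l|)/\min(\cdot)\le 1+\gamma_l/\eta_{kl}$. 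The first factor is at most $\gamma_k$.

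For the exterior factor, note that one of $a_k-b_0$ and $a_0-a_k$ is at least $\gamma_0/2$ and the other is at least $\eta_{k0}$. This gives
$$
\frac{1}{(a_k-b_0)(a_0-a_k)}\le\frac{2}{\gamma_0\eta_{k0}}\le\frac{2}{\gamma_0^2}\Bigl(1+\frac{\gamma_0}{\eta_{k0}}\Bigr).
$$
Altogether $(\beta^J_{k,-})^2\le 2\gamma_0^{-2}\gamma_k\mathcal C_k^2$. Here I use that $J\subset\mathcal K$, so dropping the factors with $l\notin J$ only enlarges the product, since each of them is $\ge1$. The case $k=0$ is the same, with the prefactor $1/\gamma_0=\gamma_0\cdot\gamma_0^{-2}$, and the $b$-endpoints are analogous. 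Thus $C_*=\sqrt2\,\gamma_0^{-1}$ works, independent of $J$ and $\boldsymbol\xi$.

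Finally, \eqref{genv} follows from $|g_J|^2=|g_J^2|\le\sum_k(\beta^J_{k,-})^2|z-a_k|^{-1}+(\beta^J_{k,+})^2|z-b_k|^{-1}$ together with $\sqrt{\sum x_i}\le\sum\sqrt{x_i}$ and the residue bounds. The main point to handle carefully is the sign determination at the exterior edges. There the roles are $b_0\leftrightarrow$ ``$b$'' (spectrum to the right) and $a_0\leftrightarrow$ ``$a$'', which matches the signs in \eqref{pfJ}. A second point of care is the degenerate case $\eta_{kl}=0$, dealt with by the trivial infinite bound.
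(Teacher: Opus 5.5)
Your proposal is correct and follows the paper's proof essentially step for step. You square the product to get a rational function with only simple poles at the endpoints and $O(z^{-2})$ decay, and compute the residues explicitly. You bound them using that $|e-\xi_l|$ lies between $|e-a_l|$ and $|e-b_l|$, together with $\frac{1}{(a_k-b_0)(a_0-a_k)}\le\frac{2}{\gamma_0^2}\bigl(1+\frac{\gamma_0}{\eta_{k0}}\bigr)$, which gives the same constant $C_*=\sqrt2\,\gamma_0^{-1}$. You then conclude from $|g_J|^2=|g_J^2|$ and $\sqrt{\sum x_i}\le\sum\sqrt{x_i}$.

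The differences are cosmetic. The paper reads the residue signs directly off the explicit formula you also wrote down, rather than from the boundary argument of $g_J$. Your remark about $l\notin J$ is worded backwards: the product over $J\setminus\{k\}$ is at most $\mathcal C_k^2$ because the \emph{omitted} factors are $\ge1$, not because dropping them enlarges anything.
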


\begin{proof}
Squaring the product gives
$$
g_J(z)^2=\frac{1}{(z-b_0)(z-a_0)}\prod_{k\in J}\frac{(z-\xi_k)^2}{(z-a_k)(z-b_k)}.
$$
This is a rational function whose poles are simple and contained in the endpoints $a_k,b_k$, $k\in J\cup\{0\}$. Moreover,
$
g_J(z)^2=O(|z|^{-2}),\quad  z\to\infty.
$
Hence its partial fraction decomposition is completely determined by the residues.

Fix $k\in J$. At the left endpoint $a_k$,
$$
\begin{aligned}
\operatorname*{Res}_{z=a_k}g_J(z)^2
=\frac{(a_k-\xi_k)^2}{(a_k-b_0)(a_k-a_0)(a_k-b_k)}
\prod_{\substack{l\in J\\l\ne k}}\frac{(a_k-\xi_l)^2}{(a_k-a_l)(a_k-b_l)}
\end{aligned}
$$
We therefore set
$$
\beta_{k,-}^J=
\frac{\xi_k-a_k}{\sqrt{\gamma_k}\sqrt{(a_k-b_0)(a_0-a_k)}}
\prod_{\substack{l\in J\\l\ne k}}
\frac{|a_k-\xi_l|}{\sqrt{|a_k-a_l||a_k-b_l|}}\geq 0.
$$
Then
$
\operatorname*{Res}_{z=a_k}g_J(z)^2=(\beta_{k,-}^J)^2.
$

Similarly, at the right endpoint $b_k$, if we set 
$$
\beta_{k,+}^J=\frac{b_k-\xi_k}{\sqrt{\gamma_k}\sqrt{(b_k-b_0)(a_0-b_k)}}
\prod_{\substack{l\in J\\l\ne k}}
\frac{|b_k-\xi_l|}{\sqrt{|b_k-a_l||b_k-b_l|}}\geq 0,
$$
then 
$
\operatorname*{Res}_{z=b_k}g_J(z)^2=-(\beta_{k,+}^J)^2.
$

Moreover, if we set
$$
\beta_{0,-}^J=\frac{1}{\sqrt{\gamma_0}}\prod_{l\in J}\frac{a_0-\xi_l}{\sqrt{(a_0-a_l)(a_0-b_l)}},
\qquad
\beta_{0,+}^J=\frac{1}{\sqrt{\gamma_0}}\prod_{l\in J}\frac{\xi_l-b_0}{\sqrt{(a_l-b_0)(b_l-b_0)}},
$$
it is immediate to check that $\operatorname*{Res}_{z=a_0}g_J(z)^2=(\beta_{0,-}^J)^2$, $
\operatorname*{Res}_{z=b_0}g_J(z)^2=-(\beta_{0,+}^J)^2,
$
which proves \eqref{pfJ}.

For $k\in J$,
$
\xi_k-a_k\leq\gamma_k, b_k-\xi_k\leq\gamma_k.
$
If $l\ne k$, then for either endpoint $e\in\{a_k,b_k\}$,
\begin{equation}\label{eq:one-gap}
  \bigg(1+\frac{\gamma_l}{\eta_{kl}}\bigg)^{-1/2} \leq \frac{|e-\xi_l|}{\sqrt{|e-a_l||e-b_l|}}
\leq\bigg(1+\frac{\gamma_l}{\eta_{kl}}\bigg)^{1/2}.
\end{equation}
Moreover, if $x=a_k-b_0$ and $y=a_0-a_k$, then $x+y=\gamma_0$ and $\min\{x,y\}\geq\eta_{k0}$. Therefore
$$
\frac{1}{\sqrt{(a_k-b_0)(a_0-a_k)}}
\leq\frac{\sqrt2}{\gamma_0}\bigg(1+\frac{\gamma_0}{\eta_{k0}}\bigg)^{1/2}.
$$
The same estimate holds with $a_k$ replaced by $b_k$. It follows from the definition of $\mathcal C_k$ that
$
\beta_{k,\pm}^J\leq\frac{\sqrt{2\gamma_k}}{\gamma_0}\mathcal C_k,k\in J.
$
For $k=0$, the same one-gap estimate gives
$
\beta_{0,\pm}^J\leq\frac{\mathcal C_0}{\sqrt{\gamma_0}}.
$
After enlarging $C_*$, this proves 
\begin{equation}\label{bbound}
\beta_{k,\pm}^J\leq C_*\sqrt{\gamma_k}\mathcal C_k,\qquad k\in J\cup\{0\}.
\end{equation} 
Finally, \eqref{pfJ} gives
$$
|g_J(z)|^2\leq\sum_{k\in J\cup\{0\}}\bigg(\frac{(\beta_{k,-}^J)^2}{|z-a_k|}+\frac{(\beta_{k,+}^J)^2}{|z-b_k|}\bigg).
$$
Using $\sqrt{x_1^2+\cdots+x_m^2}\leq x_1+\cdots+x_m$ for nonnegative $x_j$, together with \eqref{bbound}, proves \eqref{genv}.
\end{proof}
\begin{proof}[Proof of Theorem~\ref{thm:end}]
Let $J_n\uparrow\mathcal K$ be an exhaustion by finite sets. Since
$$1+\sum_{l\ne k}\frac{\gamma_l}{\eta_{kl}}\leq \Pi_{l\neq k}\bigg(1+\frac{\gamma_l}{\eta_{kl}}\bigg)\leq \mathcal C_k^2,$$
\eqref{eq:sqrt-craig} implies for every fixed $k$, one has
$\sum_{l\ne k}\frac{\gamma_l}{\eta_{kl}}<\infty.$
For every fixed $k\in\mathcal K$ and $l\ne k$, the one-gap estimate \eqref{eq:one-gap} gives
$$
\left|\log\frac{|a_k-\xi_l|}{\sqrt{|a_k-a_l||a_k-b_l|}}\right|
\leq
\frac12\log\bigg(1+\frac{\gamma_l}{\eta_{kl}}\bigg)
\leq
\frac{\gamma_l}{2\eta_{kl}},
$$
uniformly in $\xi_l\in[a_l,b_l]$. Hence the endpoint products converge uniformly. The same argument applies at $b_k$, and for $k=0$. Therefore, along any exhaustion $J_n\uparrow\mathcal K$,
$
\beta_{k,\pm}^{J_n}\longrightarrow\beta_{k,\pm}
$
for every fixed $k\in\mathcal K_0$.

By \eqref{bbound},
$
\beta_{k,\pm}^{J_n}\leq C_*\sqrt{\gamma_k}\mathcal C_k.
$
Passing to the limit gives
\begin{equation}\label{bbound-inf}
\beta_{k,\pm}\leq C_*\sqrt{\gamma_k}\mathcal C_k.
\end{equation}
Condition \eqref{eq:sqrt-craig} implies
$
\sum_{k\in\mathcal K_0}\sqrt{\gamma_k}\mathcal C_k<\infty.
$
Therefore, on every compact subset of $\mathbb C^+$, the series in \eqref{pfJ} is dominated by a summable sequence independent of $n$. Letting $n\to\infty$ and using
$g_{J_n}\longrightarrow F_\mu$
locally uniformly in $\mathbb C^+$ proves \eqref{pf}.

We next obtain the upper envelope and absolute continuity. Let $\mu_n$ be the finite-gap reflectionless measure with Borel transform $g_{J_n}$. Its density is
$$
w_n(E)=\frac{\mathbf1_{\Sigma_n}(E)}
{\pi\sqrt{(E-b_0)(a_0-E)}}
\prod_{k\in J_n}
\frac{|E-\xi_k|}
{\sqrt{|E-a_k||E-b_k|}}.
$$
Since $g_{J_n}\to F_\mu$ locally uniformly in $\mathbb C^+$, one has
$\mu_n\rightharpoonup\mu.$
Taking boundary values in \eqref{genv} gives
\begin{equation}\label{env-boundary}
w_n(E)\leq
C_*\mathbf1_{[b_0,a_0]}(E)
\sum_{k\in\mathcal K_0}
\sqrt{\gamma_k}\mathcal C_k
\bigg(\frac{1}{|E-a_k|^{1/2}}+\frac{1}{|E-b_k|^{1/2}}\bigg)
\end{equation}
for Lebesgue almost every $E\in\mathbb R$, uniformly in $n$ and in the Dirichlet data.

The right-hand side belongs to $L^1(\mathbb R)$. Indeed, for every $e\in[b_0,a_0]$,
$
\int_{b_0}^{a_0}\frac{dE}{|E-e|^{1/2}}\leq2\sqrt{2\gamma_0},
$
while
$
\sum_{k\in\mathcal K_0}\sqrt{\gamma_k}\mathcal C_k<\infty.
$
Hence, for every nonnegative $\varphi\in C_c(\mathbb R)$, weak convergence and \eqref{env-boundary} give
$$
\begin{aligned}
\int_{\mathbb R}\varphi(E)\,d\mu(E)
&=
\lim_{n\to\infty}\int_{\mathbb R}\varphi(E)w_n(E)\,dE\\
&\leq
C_*\int_{\mathbb R}\varphi(E)\mathbf1_{[b_0,a_0]}(E)
\sum_{k\in\mathcal K_0}\sqrt{\gamma_k}\mathcal C_k
\bigg(\frac{1}{|E-a_k|^{1/2}}+\frac{1}{|E-b_k|^{1/2}}\bigg)\,dE.
\end{aligned}
$$
Thus $\mu$ is absolutely continuous. Writing $d\mu(E)=w_\mu(E)\,dE$, we obtain
\begin{equation}\label{eq:enve}
0\leq w_\mu(E)\leq
C_*\mathbf1_{[b_0,a_0]}(E)\sum_{k\in\mathcal K_0}
\sqrt{\gamma_k}\mathcal C_k
\bigg(\frac{1}{|E-a_k|^{1/2}}+\frac{1}{|E-b_k|^{1/2}}\bigg)
\end{equation}
for Lebesgue almost every $E\in\mathbb R$.

It remains to prove the exact boundary identity. For almost every $E\in\Sigma$, reflectionlessness gives
$
F_\mu(E+i0)=i\pi w_\mu(E).
$
Moreover, by \eqref{bbound-inf},
$$
\sum_{k\in\mathcal K_0}
\bigg(\frac{\beta_{k,-}^2}{|E-a_k|}+\frac{\beta_{k,+}^2}{|E-b_k|}\bigg)\leq
C_*\bigg[\sum_{k\in\mathcal K_0}
\sqrt{\gamma_k}\mathcal C_k\bigg(\frac{1}{|E-a_k|^{1/2}}+\frac{1}{|E-b_k|^{1/2}}\bigg)\bigg]^2
<\infty
$$
for Lebesgue almost every $E$, since the square-root series in brackets is finite almost everywhere by its $L^1$ integrability. We may therefore take boundary values term by term in \eqref{pf}. Since
$$
F_\mu(E+i0)^2=-\pi^2w_\mu(E)^2,
$$
we obtain \eqref{dens}. This completes the proof of Theorem~\ref{thm:end}.
\end{proof}

\begin{corollary}\label{thm:holder}
Assume that \eqref{eq:sqrt-craig} holds.  Then for every
$\mu\in\mathcal R(\Sigma)$,
\begin{equation}\label{env-mu}
w_\mu(E)\leq C_*\mathbf1_{[b_0,a_0]}(E)
\sum_{k\in\mathcal K_0}
\sqrt{\gamma_k}\mathcal C_k
\bigg(\frac{1}{|E-a_k|^{1/2}}+\frac{1}{|E-b_k|^{1/2}}\bigg)
\end{equation}
for Lebesgue almost every $E\in\mathbb R$.
Consequently, there exists a constant $C_\Sigma<\infty$, depending
only on $\Sigma$, such that
$\mu(X)\le C_\Sigma |X|^{1/2}$
for every measurable set $X\subset\mathbb R$.
\end{corollary}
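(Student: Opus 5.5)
The envelope \eqref{env-mu} is exactly \eqref{eq:enve}, which was established in the proof of Theorem~\ref{thm:end}. So the only new content is the measurable-set bound. To get it, I will use a rearrangement argument applied to each square-root singularity separately, and then sum using \eqref{eq:sqrt-craig}.

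Fix a measurable set $X\subset\mathbb R$. By \eqref{env-mu},
$$
\mu(X)\le C_*\sum_{k\in\mathcal K_0}\sqrt{\gamma_k}\,\mathcal C_k\left(\int_X\frac{dE}{|E-a_k|^{1/2}}+\int_X\frac{dE}{|E-b_k|^{1/2}}\right).
$$
Here monotone convergence justifies exchanging the sum and the integral.

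The key elementary step is a bathtub inequality. For any point $e\in\mathbb R$, the function $|E-e|^{-1/2}$ is symmetric decreasing about $e$. Hence its integral over a set of measure $|X|$ is largest when the set is the interval $(e-|X|/2,\,e+|X|/2)$. This gives
$$
\int_X\frac{dE}{|E-e|^{1/2}}\le 2\int_0^{|X|/2}s^{-1/2}\,ds=2\sqrt2\,|X|^{1/2}.
$$
To make this rigorous, compare $X$ with that interval: the part of $X$ outside the interval has measure equal to the part of the interval missed by $X$. On the first region the integrand is at most $(|X|/2)^{-1/2}$, and on the second it is at least that value.

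Summing over $k$ gives
$$
\mu(X)\le 4\sqrt2\,C_*\Big(\sum_{k\in\mathcal K_0}\sqrt{\gamma_k}\,\mathcal C_k\Big)|X|^{1/2}=:C_\Sigma|X|^{1/2}.
$$
The sum is finite by \eqref{eq:sqrt-craig}. The constant depends only on the gap geometry of $\Sigma$, and not on $\mu$ or on the Dirichlet data, since $C_*$ does not depend on them. If $|X|=\infty$, the bound is trivial because $\mu$ is a probability measure.

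There is no real obstacle here. The only point needing care is that the rearrangement bound is uniform in the location $e$. That uniformity is what lets infinitely many singular points accumulate without affecting the constant.
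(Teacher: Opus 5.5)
Your proposal is correct and matches the paper's proof: both take \eqref{env-mu} directly from \eqref{eq:enve}, bound each $\int_X|E-e|^{-1/2}\,dE$ by $2\sqrt2\,|X|^{1/2}$ uniformly in $e$, and then sum using \eqref{eq:sqrt-craig}. The only difference is how that one-point bound is obtained: you use a bathtub rearrangement against the centered interval, while the paper uses the layer-cake formula with $|X\cap\{|E-e|^{-1/2}>t\}|\le\min\{|X|,2t^{-2}\}$; both give the same constant.
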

\begin{proof}
The  upper envelope \eqref{env-mu} is just \eqref{eq:enve}.
We only need to prove the measurable-set estimate. For every $e\in\mathbb R$ and every measurable set $X\subseteq\mathbb R$, by the layer-cake representation,
$$
\begin{aligned}
\int_X\frac{dE}{|E-e|^{1/2}}&=
\int_0^\infty\left|X\cap
\left\{E:\frac1{|E-e|^{1/2}}>t\right\}\right|\,dt
\leq\int_0^\infty\min\bigl\{|X|,2t^{-2}\bigr\}\,dt\\
&=|X|\int_0^{\sqrt{2/|X|}}dt
+2\int_{\sqrt{2/|X|}}^\infty t^{-2}\,dt=
2\sqrt2\,|X|^{1/2}.
\end{aligned}
$$
Applying this estimate term by term to \eqref{env-mu} yields
$$
\begin{aligned}
\mu(X)&=\int_Xw_\mu(E)\,dE\leq C_*
\sum_{k\in\mathcal K_0}
\sqrt{\gamma_k}\mathcal C_k
\int_X\bigg(\frac{1}{|E-a_k|^{1/2}}+\frac{1}{|E-b_k|^{1/2}}
\bigg)\,dE\\
&\leq C_*\bigg(\sum_{k\in\mathcal K_0}\sqrt{\gamma_k}\mathcal C_k
\bigg)|X|^{1/2}\leq C_*|X|^{1/2}.
\end{aligned}
$$
The constant is independent of $\mu\in\mathcal R(\Sigma)$. This completes the proof of Corollary~\ref{thm:holder}.
\end{proof}

\subsection{Uniform edge $L^1$ blow-up profiles}

Now let's analysis the uniform edge blow-up of  $w_\mu(a_k-\cdot)$. 

\begin{proof}[Proof of Corollary \ref{cor:L1}:]

Note $F_\mu(E)$ is real for $E\in\mathbb R\setminus\Sigma$, \eqref{pf} gives
\begin{equation}\label{pos}
\pi^2w_\mu(E)^2=\bigg[\sum_{k\in\mathcal K_0}
\bigg(\frac{\beta_{k,-}^2}{a_k-E}+\frac{\beta_{k,+}^2}{E-b_k}\bigg)\bigg]_+
\end{equation}
for almost every $E\in\mathbb R$, where $[x]_+=\max\{x,0\}$.

For $x\geq0$ and $y\in\mathbb R$,
$
\big|\sqrt{[x+y]_+}-\sqrt{x}\big|\leq\sqrt{|y|}.
$
Apply this inequality to \eqref{pos} with $E=a_k-s$ and $x=\beta_{k,-}^2/s$. We obtain
$$
\begin{aligned}
\pi\bigg|w_\mu(a_k-s)-\frac{\beta_{k,-}}{\pi\sqrt s}\bigg|
&\leq\sum_{\substack{l\in\mathcal K_0\\l\ne k}}
\frac{\beta_{l,-}}{\sqrt{|a_k-s-a_l|}}+
\sum_{l\in\mathcal K_0}
\frac{\beta_{l,+}}{\sqrt{|a_k-s-b_l|}}.
\end{aligned}
$$
For every fixed endpoint $e\ne a_k$,
$$
\frac{1}{\sqrt r}\int_0^r\frac{ds}{\sqrt{|a_k-s-e|}}\leq2\sqrt2.
$$
And if $r<|a_k-e|/2$, then
$$
\frac{1}{\sqrt r}\int_0^r\frac{ds}{\sqrt{|a_k-s-e|}}
\leq\sqrt{\frac{2r}{|a_k-e|}},
$$
hence the left-hand side tends to zero as $r\downarrow0$. 
By \eqref{bbound},
$\beta_{l,\pm}\leq C_*\sqrt{\gamma_l}\mathcal C_l,$
and $\sum_l\sqrt{\gamma_l}\mathcal C_l<\infty$. Applying dominated convergence to the sum and then making the change of variables proves \eqref{L1}, uniformly over $\mu\in\mathcal R(\Sigma)$.

\end{proof}

Before turning to the lower profile, we record a geometric consequence of \eqref{eq:sqrt-craig}. The summability of the gap interactions prevents the spectrum from becoming too thin at small scales and, in particular, implies
weak homogeneity. This will also be used below to control the Dirichlet data inside the gaps.

\begin{lemma}\label{lem:weak-hom}
Assume \eqref{eq:sqrt-craig}. Then, for every $x\in\Sigma$,
$$
\limsup_{r\downarrow0}\frac{|\Sigma\cap(x-r,x+r)|}{2r}\ge\frac12.
$$
In particular, $\Sigma$ is weakly homogeneous.
\end{lemma}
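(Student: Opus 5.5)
The plan is to compare the Lebesgue measure of $\Sigma$ near $x$ with the total length of gaps near $x$. The gap lengths will be controlled through the interaction sums $\sum_{l\ne k}\gamma_l/\eta_{kl}$. As observed in the proof of Theorem~\ref{thm:end}, these sums are finite for every $k\in\mathcal K_0$ under \eqref{eq:sqrt-craig}, because $1+\sum_{l\ne k}\gamma_l/\eta_{kl}\le\mathcal C_k^2$. The elementary estimate behind everything is the following. If a gap $G_l$ lies at distance $\eta<r$ from a point $e$, then its contribution to $(e-r,e+r)$ is at most $\min\{\gamma_l,2r\}\le 2r\,\gamma_l/\eta$. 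Moreover,
$$
\sum_{l:\ \operatorname{dist}(e,G_l)<r}\frac{\gamma_l}{\operatorname{dist}(e,G_l)}\longrightarrow0,\qquad r\downarrow0,
$$
whenever the full series $S(e)=\sum_l\gamma_l/\operatorname{dist}(e,G_l)$, taken over gaps not containing $e$ in their closure, is finite. This follows by dominated convergence, since every such $l$ eventually drops out of the sum. Consequently, gaps not adjacent to $e$ occupy a vanishing proportion of $(e-r,e+r)$.

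\emph{Case 1: $x$ is a gap endpoint.} Say $x=b_k$ with $k\in\mathcal K$; the cases $x=a_k$ and the exterior edges $x\in\{b_0,a_0\}$ are identical, using $\eta_{0l}$. For $r<\gamma_k$, the left half $(x-r,x)$ lies in $G_k$. For $l\ne k$, the distance from $b_k$ to $G_l$ is at least $\eta_{kl}$, so $S(b_k)\le\sum_{l\ne k}\gamma_l/\eta_{kl}<\infty$. The estimate above then gives $|\Sigma\cap(x,x+r)|/r\to1$, hence
$$
\lim_{r\downarrow0}\frac{|\Sigma\cap(x-r,x+r)|}{2r}=\frac12 .
$$
This case explains the constant $\tfrac12$.

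\emph{Case 2: $x\in\Sigma$ is not a gap endpoint.} Then $x$ lies in the closure of no gap. If $S(x)<\infty$, the same dominated-convergence argument gives full density $1$. The remaining situation is $S(x)=\infty$, which forces gaps $G_{k_n}$ accumulating at $x$ with $\gamma_{k_n}/\operatorname{dist}(x,G_{k_n})$ not summable. In this situation I would transfer Case~1 from a nearby endpoint. Pick a gap $G_k$ near $x$ whose endpoint $e$ faces $x$, and apply Case~1 at $e$ on the side containing $x$, with a radius $\rho$ large compared to $|x-e|$ but inside the range where the one-sided density at $e$ exceeds $1-\varepsilon$. The interval $(x-\rho,x+\rho)$ then contains $(e,e+\rho-|x-e|)$, which is mostly spectrum, and this gives a ratio of at least $\tfrac12-2\varepsilon$.

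The main obstacle is making the choice of $(k,\rho)$ compatible. The admissible radius $\rho_k(\varepsilon)$ from Case~1 depends on $k$ through the tail of $\sum_{l\ne k}\gamma_l/\eta_{kl}$, and this dependence is not uniform. Summability of $\sqrt{\gamma_k}\,\mathcal C_k$ does not bound $\mathcal C_k$ on its own. I would first try to choose, among the gaps accumulating at $x$, one that is maximal in a suitable sense: for instance, the gap maximizing $\gamma_l/\operatorname{dist}(x,G_l)$ within a dyadic annulus around $x$. I would then bound the tail interactions at its endpoint by those at $x$ plus a comparison $\eta_{kl}\gtrsim\operatorname{dist}(x,G_l)$, valid for gaps farther from $x$ than $G_k$. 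If this comparison fails, the fallback is to argue directly at dyadic scales $r=2^{-m}$ around $x$. At each such scale, the single largest gap in $(x-r,x+r)$ lies on one side of $x$, because $x\notin\overline{G_l}$. Its endpoint interaction bound would then be used to show that all the other gaps in the window have small total length.
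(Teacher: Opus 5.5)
\textbf{There is a genuine gap.} Your Case~1 and the subcase $S(x)<\infty$ of Case~2 are correct. The bound $\min\{\gamma_l,2r\}\le 2r\gamma_l/\operatorname{dist}(e,G_l)$, together with dominated convergence, shows that non-adjacent gaps fill a vanishing proportion of $(e-r,e+r)$.

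The gap is the subcase $S(x)=\infty$, which is where the lemma has content. You give no argument there. Neither mechanism you sketch can work in general, because both look for a single local witness. The first needs an interval $(e,e+\rho)\ni x$, anchored at a gap edge, on which $\Sigma$ has density at least $1-\varepsilon$. The second needs one dominant gap in a window whose interaction sum makes the remaining gaps negligible.

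Here is a concrete obstruction. Let $\lambda=1-h\in(0,1)$ and $\sqrt\lambda-\lambda<s<h$. Set $G_n=(\lambda^{n+1}+s\lambda^n,\lambda^n)$ and $\Sigma=[-1,1]\setminus\bigcup_{n\ge1}(G_n\cup(-G_n))$, labelling $\pm G_n$ by $\pm n$.
\begin{itemize}
\item A direct computation gives $\mathcal C_{\pm n}\asymp(\lambda+s)^{-n}$. Hence $\sqrt{\gamma_n}\,\mathcal C_n\asymp(\sqrt\lambda/(\lambda+s))^n$ is summable and \eqref{eq:sqrt-craig} holds, while $S(0)=\infty$.
\item Every sufficiently small interval containing $0$ has $\Sigma$-density at most $D=s/(h(\lambda+s))<1$. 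Equality holds for $(-a_n,a_n)$ with $a_n=\lambda^{n+1}+s\lambda^n$.
\item Consequently no gap edge facing $0$ has one-sided density above $D$ at any radius $\rho>|e|$. For $\varepsilon<1-D$ the range of $\rho$ your transfer requires is empty. At best the transfer gives a ratio of about $D/2$, which is close to $\tfrac14$ when $\lambda\uparrow1$ and $s\downarrow\sqrt\lambda-\lambda$.
\item Every window $(-r,r)$ contains gaps of total length at least $2(1-D)r$, while the largest has length less than $hr/\lambda$. So your fallback claim, that the other gaps in the window have small total length, is false.
\end{itemize}
The lemma does hold in this example, since summability forces $D>\tfrac12$. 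But it holds only as a collective effect of infinitely many comparable gaps. Condition \eqref{eq:sqrt-craig} enters through a global balance between $\sqrt{\gamma_n}$ and the exponentially growing $\mathcal C_n$, not through any single gap.

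What is missing is a lower bound on the thickness of $\Sigma$ at $x$, averaged logarithmically over all scales. The paper obtains it analytically rather than gap by gap. It takes $\mu_x\in\mathcal R(\Sigma)$ whose Dirichlet datum in each gap is the edge farther from $x$: $\xi_j=a_j$ if $b_j\le x$, and $\xi_j=b_j$ if $a_j\ge x$. For this choice the factors in \eqref{prod} combine exactly into
\[
|F_{\mu_x}(x+iy)|^2=\frac1{y^2}\exp\Bigl(-\int_\Sigma\frac{|E-x|}{(E-x)^2+y^2}\,dE\Bigr).
\]
On the other hand, the residue expansion \eqref{pf} and the coefficient bounds give $|F_\mu(x+iy)|^2\le y^{-1}\sum_k(\beta_{k,-}^2+\beta_{k,+}^2)\le C_\Sigma/y$, uniformly in $\mu\in\mathcal R(\Sigma)$. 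Comparing the two yields
\[
\int_\Sigma\frac{|E-x|}{(E-x)^2+y^2}\,dE\ge\log y^{-1}-\log C_\Sigma .
\]
If instead $|\Sigma\cap(x-t,x+t)|\le 2dt$ held for all small $t$ with some $d<\tfrac12$, an integration by parts would bound the same integral by $2d\log y^{-1}+O(1)$, a contradiction.

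This argument treats endpoints and interior points alike. It needs no uniformity in $k$ of the one-sided edge densities, which is exactly what your Case~2 lacks. A purely geometric proof along your lines would have to reproduce this multiscale logarithmic count.
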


\begin{proof}
The residue identity in Theorem~\ref{thm:end} and its coefficient
bounds give
\begin{equation}\label{eq:weak-hom-resolvent}
|F_\mu(x+iy)|^2\le
\frac1y\sum_{k\in\mathcal K_0}
\bigl(\beta_{k,-}^2+\beta_{k,+}^2\bigr)
\le \frac{C_\Sigma}{y},
\qquad x\in\mathbb R,\quad y>0,
\end{equation}
uniformly over $\mu\in\mathcal R(\Sigma)$.

Fix $x\in\Sigma$. By the canonical parametrization, there exists
$\mu_x\in\mathcal R(\Sigma)$ with Dirichlet data
$$
\xi_j=
\begin{cases}
a_j,& b_j\le x,\\
b_j,& a_j\ge x.
\end{cases}
$$
Write $F_x=F_{\mu_x}$. For these choices, each gap factor satisfies
$$
\log\left|\frac{x+iy-\xi_j}{\sqrt{x+iy-a_j}\sqrt{x+iy-b_j}}\right|^2=
\int_{G_j}\frac{|E-x|}{(E-x)^2+y^2}\,dE.
$$
On the other hand, $$
\int_{b_0}^{a_0}\frac{|E-x|}{(E-x)^2+y^2}\,dE
=\log\frac{|x+iy-b_0|\,|x+iy-a_0|}{y^2},
$$
therefore, 
\begin{equation}\label{eq:extremal-modulus}
\begin{aligned}
    |F_x(x+iy)|^2&=\frac{1}{|x+iy-b_0|\,|x+iy-a_0|}\exp\left(
\int_{\cup G_j}\frac{|E-x|}{(E-x)^2+y^2}\,dE
\right)\\
&=\frac1{y^2}\exp\left(-\int_\Sigma\frac{|E-x|}{(E-x)^2+y^2}\,dE\right).
\end{aligned}
\end{equation}
Combining \eqref{eq:weak-hom-resolvent} and
\eqref{eq:extremal-modulus}, we obtain
\begin{equation}\label{eq:log-thickness}
\int_\Sigma\frac{|E-x|}{(E-x)^2+y^2}\,dE
\ge \log y^{-1}-\log C_\Sigma.
\end{equation}

Suppose, contrary to the assertion, that there exist
$0<d<1/2$ and $R>0$ such that
$$
M(t):=|\Sigma\cap(x-t,x+t)|\le 2dt,
\qquad 0<t\le R.
$$
Since
$$
M(t)=\int_0^t
\bigl(\mathbf1_\Sigma(x+u)+\mathbf1_\Sigma(x-u)\bigr)\,du,
$$
the function $M$ is absolutely continuous. For $0<y<R$,
integration by parts yields
$$
\begin{aligned}
\int_\Sigma\frac{|E-x|}{(E-x)^2+y^2}\,dE&=\int_0^\infty\frac{t}{t^2+y^2}\,dM(t)\le \frac{M(y)}{y}
     +\int_y^R\frac{dM(t)}{t}
     +\frac{|\Sigma|}{R}\\
&=\frac{M(R)}{R}
  +\int_y^R\frac{M(t)}{t^2}\,dt
  +\frac{|\Sigma|}{R}\\
&\le 2d\log\frac{R}{y}+2d+\frac{|\Sigma|}{R}.
\end{aligned}
$$
Dividing this estimate and \eqref{eq:log-thickness}
by $\log y^{-1}$ and letting $y\downarrow0$ gives
$1\le 2d$, a contradiction.
\end{proof}

It is plausible that for some  $\mu\in\mathcal R(\Sigma)$, $\beta_{k,\pm}=0$, or say the Dirichlet date $\xi_k$ lies in the gap edges. In this case, the associated square-root singularity disappears.  However, if $\beta_{k,\pm}>0$, one obtain the density-one lower profile from the $L^1$ endpoint asymptotics.

\begin{corollary}\label{prop:density-one1}
Fix $\mu\in\mathcal R(\Sigma)$. If there exists $c>0$, such that $w_\mu(E)\geq c>0
$ for almost every $E\in\Sigma$, then $\beta_{k,\pm}>0.$ As a consequence,  for any $k\in\mathcal K_0$,
$$
U_{k,-}=
\bigg\{E<a_k:w_\mu(E)\geq\frac{\beta_{k,-}}{2\pi\sqrt{a_k-E}}\bigg\}
$$
has one-sided density one at $a_k$. The analogous assertion holds at $b_k$.
\end{corollary}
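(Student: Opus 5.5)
The plan has two parts. First we show that a uniform density floor forces $\beta_{k,\pm}>0$. Then we derive the density-one statement from the uniform $L^1$ profile in Corollary~\ref{cor:L1}.

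\emph{Positivity of the residues.} Fix $k\in\mathcal K_0$ and consider $\beta_{k,-}$ at $a_k$. For $k\in\mathcal K$ the limit formula for $\beta_{k,-}$ is the factor $(\xi_k-a_k)/\sqrt{\gamma_k}$ times a convergent product. By the two-sided one-gap bound \eqref{eq:one-gap} and the convergence argument in the proof of Theorem~\ref{thm:end}, this product is bounded below by a positive constant. Hence $\beta_{k,-}=0$ if and only if $\xi_k=a_k$. For $k=0$ the limit is a product of nonvanishing factors, so $\beta_{0,\pm}>0$ automatically.

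So it suffices to exclude $\xi_k=a_k$. Suppose $\xi_k=a_k$. Then $g_\mu>0$ on $G_k$, and the Dirichlet zero sits at the left edge. I would use the density floor as follows. Since $\Sigma$ is weakly homogeneous (Lemma~\ref{lem:weak-hom}), the point $a_k$ sees mass of $\Sigma$ of proportion at least $1/2$ at arbitrarily small scales $r$. Write
\[
g_\mu(a_k+\epsilon)=\int\frac{w_\mu(E)\,dE}{E-a_k-\epsilon}.
\]
As $\epsilon\downarrow0$, the contribution from $\Sigma\cap(a_k-r,a_k)$ is negative and at least
\[
c\int_{\Sigma\cap(a_k-r,a_k)}\frac{dE}{a_k-E+\epsilon}
\]
in absolute value. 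Using the weak-homogeneity lower bound along a sequence of scales (the integration-by-parts argument of Lemma~\ref{lem:weak-hom} run in reverse), this diverges to $-\infty$. The rest of the integral, over $\Sigma\cap[b_k,\infty)$, is positive. One needs this term to stay bounded, or at least not to cancel the divergence. This is where I expect the main obstacle: the right-hand side also diverges when $\Sigma$ is thick at $b_k$.

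To handle this, I would compare at $a_k+\epsilon$ for small $\epsilon$, where the left divergence is logarithmic in $\epsilon$. The right part is bounded by its value at $\epsilon=0$, taken relative to $b_k$, which is finite uniformly since the distance to $b_k$ is at least $\gamma_k/2$. Hence $g_\mu(a_k+\epsilon)\to-\infty$, which contradicts $g_\mu>0$ on $G_k$. Thus $\xi_k>a_k$ and $\beta_{k,-}>0$; the argument at $b_k$ is symmetric.

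\emph{Density one.} Let $\beta=\beta_{k,-}>0$ and $s\in(0,1)$, and set
\[
h_r(s)=\sqrt r\,w_\mu(a_k-rs).
\]
The rescaled complement of $U_{k,-}$ is
\[
V_r=\{s: h_r(s)<\beta/(2\pi\sqrt s)\}.
\]
On $V_r$ we have $|h_r(s)-\beta/(\pi\sqrt s)|\ge\beta/(2\pi\sqrt s)\ge\beta/(2\pi)$. Therefore
\[
|V_r|\le\frac{2\pi}{\beta}\,\bigl\|h_r-\beta/(\pi\sqrt s)\bigr\|_{L^1(0,1)},
\]
which tends to $0$ by Corollary~\ref{cor:L1}. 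Undoing the scaling, $|(a_k-r,a_k)\setminus U_{k,-}|/r=|V_r|\to0$, which is one-sided density one at $a_k$. The statement at $b_k$ is identical.
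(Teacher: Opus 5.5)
Your proposal is correct and follows the paper's proof essentially step by step. The density floor together with weak homogeneity (Lemma~\ref{lem:weak-hom}) forces $g_\mu(a_k+\epsilon)\to-\infty$ (and symmetrically $g_\mu(b_k-\epsilon)\to+\infty$), so the Dirichlet zero lies strictly inside $G_k$. The lower bound $(1+\gamma_l/\eta_{kl})^{-1/2}$ from \eqref{eq:one-gap} on the product factors, which the paper leaves implicit, then gives $\beta_{k,\pm}>0$, and Chebyshev's inequality applied to the $L^1$ profile of Corollary~\ref{cor:L1} gives density one. Two cosmetic points in the divergence step: for $\epsilon<\gamma_k/2$ the contribution of $E\ge b_k$ is bounded by $2/\gamma_k$, not by its value at $\epsilon=0$, since the integrand increases in $\epsilon$; and the part $E\le a_k-r$ that you omit is nonpositive, so it only helps.
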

\begin{proof}
Fix $G_k=(a_k,b_k)$ and $0<r<\gamma_k/2$. Define
$$
M(t):=|\Sigma\cap(a_k-t,a_k)|
=\int_0^t\mathbf1_\Sigma(a_k-u)\,du,
\qquad 0\leq t\leq r.
$$
By Lemma \ref{lem:weak-hom},  there
exist $c_k>0$ and $r_j\downarrow0$ such that
$M(r_j)\geq c_kr_j.$ Passing to a subsequence, assume $r_{j+1}<r_j/4$.
For $0<s<\gamma_k/2$, the contribution to
$F_\mu(a_k+s)$ from $E\leq a_k-r$ is nonpositive,
whereas the contribution from $E\geq b_k$ is at most
$2/\gamma_k$, since $\mu$ is a probability measure.
Meanwhile, $w_\mu(E)\geq c$ for almost every $E\in\Sigma$, as a consequence we obtain
$$
\begin{aligned}
F_\mu(a_k+s)\leq\frac2{\gamma_k}-c
\int_{\Sigma\cap(a_k-r,a_k)}
\frac{dE}{s+a_k-E}=\frac2{\gamma_k}
-c\int_0^r\frac{M'(t)}{s+t}\,dt.
\end{aligned}
$$
For $s>0$ sufficiently small, let
$N(s)=\max\{N:r_N>s\}.$  Integration by parts yields
$$
\begin{aligned}
\int_0^r\frac{M'(t)}{s+t}\,dt=\frac{M(r)}{s+r}
+\int_0^r\frac{M(t)}{(s+t)^2}\,dt
\geq\sum_{j=2}^{N(s)} c_k\int_{r_j}^{2r_j}\frac{r_j}{(s+t)^2}\,dt
\geq\frac{c_k}{9}(N(s)-1).
\end{aligned}
$$
Consequently,
$
F_\mu(a_k+s)\rightarrow-\infty
$
as $s\downarrow0$. 
Similarly,
$F_\mu(b_k-s)\to+\infty$.
Since
$$
F_\mu'(x)=\int_\Sigma\frac{d\mu(E)}{(E-x)^2}>0,
\qquad x\in G_k,
$$
there is a unique zero $a_k<\xi_k<b_k$. It follows that $\beta_{k,\pm}>0$.

By Chebyshev's inequality,
$$
\begin{aligned}
\frac{|(a_k-r,a_k)\setminus U_{k,-}|}{r}
&\leq\frac{2\pi}{\beta_{k,-}\sqrt r}
\int_0^r\left|w_\mu(a_k-s)-\frac{\beta_{k,-}}{\pi\sqrt s}
\right|\,ds
\longrightarrow 0
\end{aligned}
$$
as $r\downarrow0$, by Corollary~\ref{cor:L1}.
Thus $U_{k,-}$ has one-sided density one at $a_k$.
The argument at $b_k$ is identical. For the endpoints $a_0,b_0$, the conclusion follows from  Corollary~\ref{cor:L1} by the same argument.

\end{proof}

\section{Fourier decay}
\subsection{Exact averaged Fourier decay}
The endpoint identity also determines the exact leading term of the time-averaged Fourier correlation. We now derive this directly from \eqref{pf} by Plancherel's theorem and Karamata's Tauberian theorem.

\begin{proposition}\label{prop:avg}
Assume that \eqref{eq:sqrt-craig} holds and let $\mu\in\mathcal R(\Sigma)$. Then
\begin{equation}\label{avg}
\lim_{T\to\infty}\frac{1}{\log T}\int_0^T|\widehat\mu(t)|^2\,dt=
\frac{1}{\pi}\sum_{k\in\mathcal K_0}\big(\beta_{k,-}^2+\beta_{k,+}^2\big)>0,
\end{equation}
where $\beta_{k,\pm}$ are the coefficients in Theorem~\ref{thm:end}.
\end{proposition}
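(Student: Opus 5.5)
The plan is to pass through an Abel (exponentially damped) average, compute it with Plancherel, read it off the residue identity \eqref{pf}, and then remove the damping by Karamata's Tauberian theorem. For $y>0$, the Fourier transform of $E\mapsto F_\mu(E+iy)$ is explicit: $F_\mu(E+iy)=\int\frac{d\mu(x)}{x-E-iy}$ and $\frac{1}{x-E-iy}=i\int_0^\infty e^{-it(x-E-iy)}\,dt$. Hence $F_\mu(\cdot+iy)$ is, up to the factor $i$, the inverse Fourier transform of $t\mapsto \mathbf 1_{t>0}\,e^{-yt}\widehat\mu(t)$. Plancherel then gives the exact identity
$$
\int_0^\infty e^{-2yt}|\widehat\mu(t)|^2\,dt=\frac1{2\pi}\int_{\mathbb R}|F_\mu(E+iy)|^2\,dE .
$$

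The key observation is $|F_\mu|^2=|F_\mu^2|$, so we can substitute \eqref{pf} directly. We want the asymptotics as $y\downarrow0$ of
$$
\int_{\mathbb R}\Big|\sum_k\Big(\frac{\beta_{k,-}^2}{E+iy-a_k}-\frac{\beta_{k,+}^2}{E+iy-b_k}\Big)\Big|\,dE.
$$

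\emph{Upper bound.} By the triangle inequality this integral is at most $\sum_e \beta_e^2\int_{\mathbb R}|E+iy-e|^{-1}dE$. However, each of these integrals diverges at infinity, so we first cut off. Outside $[b_0-1,a_0+1]$ we use $|F_\mu(E+iy)|\le 1/\operatorname{dist}(E,\Sigma)$, which contributes $O(1)$. On the compact window, $\int|E+iy-e|^{-1}dE=2\log(1/y)+O(1)$, uniformly in $e\in[b_0,a_0]$. Since $\sum_e\beta_e^2\le C\sum_k\gamma_k\mathcal C_k^2<\infty$ (this follows from \eqref{bbound-inf} and \eqref{eq:sqrt-craig}), we obtain
$$
\int|F_\mu|^2\le 2S\log(1/y)+O(1),\qquad S=\sum_k(\beta_{k,-}^2+\beta_{k,+}^2).
$$

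\emph{Lower bound.} This is the main obstacle, because cancellation between different poles must be excluded. I would localize: for a finite set $P$ of endpoints and a small $\delta>0$ such that the $\delta$-neighbourhoods of the points of $P$ are disjoint, consider $E\in(e-\delta,e+\delta)$ with $e\in P$. There
$$
|F_\mu^2|\ge \frac{\beta_e^2}{|E+iy-e|}-\sum_{e'\ne e}\frac{\beta_{e'}^2}{|E+iy-e'|}.
$$
The main term integrates to $2\beta_e^2\log(\delta/y)$. For the error term we split $e'\in P$ from $e'\notin P$:
\begin{itemize}
\item for $e'\in P$, the integral over the window is $O_\delta(1)$, since $|E-e'|\ge\delta$ there;
\item for $e'\notin P$, the integral is at most $\beta_{e'}^2\,(2\log(1/y)+C)$, and summing gives $\varepsilon_P\,(2\log(1/y)+C)$ with $\varepsilon_P=\sum_{e'\notin P}\beta_{e'}^2$.
\end{itemize}
The $e'\notin P$ errors are counted once for each window, i.e.\ $|P|$ times in total. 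To keep this under control, bound each window's tail error by the integral over that window only; the windows are disjoint, so the total tail error is at most $\varepsilon_P(2\log(1/y)+C)$. This yields
$$
\liminf_{y\downarrow0}\frac{\int|F_\mu|^2}{2\log(1/y)}\ge \sum_{e\in P}\beta_e^2-\varepsilon_P ,
$$
and letting $P$ increase gives the matching bound $S$. Hence $\int_0^\infty e^{-2yt}|\widehat\mu|^2\,dt\sim \frac{S}{\pi}\log(1/y)$.

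To remove the damping, apply Karamata's Tauberian theorem to the nondecreasing function $A(T)=\int_0^T|\widehat\mu|^2\,dt$. Its Laplace–Stieltjes transform at $s=2y$ behaves like $\frac S\pi\log(1/s)$, and $\log$ is slowly varying with index $0$. Karamata therefore gives $A(T)\sim\frac S\pi\log T$, which is \eqref{avg}.

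Positivity of the limit: $S>0$, because otherwise $F_\mu^2\equiv0$ by \eqref{pf}, which is impossible since $F_\mu(z)\sim -1/z$ at infinity.
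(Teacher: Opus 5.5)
Your proof is correct and follows the paper's argument. The ingredients are the same: an Abel-damped Plancherel identity, the observation $|F_\mu|^2=|F_\mu^2|$ combined with the residue expansion \eqref{pf}, a $2\beta^2\log y^{-1}$ contribution from each endpoint plus a summable tail, and Karamata's Tauberian theorem.

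The one real difference is how you reach $\int_0^\infty e^{-2yt}|\widehat\mu(t)|^2\,dt=\frac1{2\pi}\int_{\mathbb R}|F_\mu(E+iy)|^2\,dE$. You read it off directly from the one-sided Laplace representation of $F_\mu(\cdot+iy)$. The paper instead passes through the Poisson kernel $\operatorname{Im}F_\mu$, and must then prove $\int_{\mathbb R}F_\mu(x+iy)^2\,dx=0$ separately by a contour argument to discard the $\operatorname{Re}F_\mu^2$ term. Your route is slightly more economical. Your separate upper and lower bounds also stand in for the paper's comparison of $|F_\mu|^2$ with the finite sum $|S_J|$; this is a matter of organization only.
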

\begin{proof}
Set $S_\mu=\sum_{k\in\mathcal K_0}
\bigl(\beta_{k,-}^2+\beta_{k,+}^2\bigr).$
By \eqref{eq:sqrt-craig}, $S_\mu<\infty$.
Moreover, $S_\mu>0$, since otherwise \eqref{pf} would imply
$F_\mu\equiv0$, contradicting $\mu(\mathbb R)=1$.

 For $y>0$, define
$u_y(x)=\frac{1}{\pi}\operatorname{Im}F_\mu(x+iy).$
Since
$$
u_y=P_y*\mu,\qquad P_y(x)=\frac{1}{\pi}\frac{y}{x^2+y^2},
$$
and hence
$\widehat u_y(t)=e^{-y|t|}\widehat\mu(t).$
By Plancherel's theorem,
\begin{equation}\label{pl1}
\begin{aligned}
\int_{\mathbb R}e^{-2y|t|}|\widehat\mu(t)|^2\,dt&=2\pi\int_{\mathbb R}|u_y(x)|^2\,dx=\frac{2}{\pi}\int_{\mathbb R}|\operatorname{Im}F_\mu(x+iy)|^2\,dx\\
&=\frac{1}{\pi}\int_{\mathbb R}\big(|F_\mu(x+iy)|^2-\operatorname{Re}F_\mu(x+iy)^2\big)\,dx.
\end{aligned}
\end{equation}

Fix $y>0$. Apply Cauchy's theorem to $F_\mu^2$ on the contour
consisting of the segment $[-R+iy,R+iy]$ and the semicircle
$
\Gamma_R=\{iy+Re^{it}:0\leq t\leq\pi\}.$
Since $F_\mu(z)^2=O(|z|^{-2})$ at infinity,
$\int_{-R}^{R}F_\mu(x+iy)^2\,dx
=-\int_{\Gamma_R}F_\mu(z)^2\,dz
=O(R^{-1}).$
Letting $R\to\infty$ gives
\begin{equation}\label{Fzero}
\int_{\mathbb R}F_\mu(x+iy)^2\,dx=0.
\end{equation}

Fix $\varepsilon>0$. There exists a finite set $J\subset\mathcal K_0$ such that
\begin{equation}\label{tail}
\sum_{k\in\mathcal K_0\setminus J}\big(\beta_{k,-}^2+\beta_{k,+}^2\big)<\varepsilon.
\end{equation}
We may assume that $0\in J$. For convenience, denote the finite part of the sum by
$$
S_J(z)=\sum_{k\in J}\bigg(\frac{\beta_{k,-}^2}{z-a_k}-\frac{\beta_{k,+}^2}{z-b_k}\bigg).
$$
By the triangular inequality  and Theorem~\ref{thm:end},
$$
\begin{aligned}
\big||F_\mu(x+iy)|^2-|S_J(x+iy)|\big|
&\leq\sum_{k\in\mathcal K_0\setminus J}
\bigg(\frac{\beta_{k,-}^2}{|x+iy-a_k|}+\frac{\beta_{k,+}^2}{|x+iy-b_k|}\bigg).
\end{aligned}
$$

We first integrate on the fixed interval
$I=[b_0-\gamma_0,a_0+\gamma_0].$
For every $e\in[b_0,a_0]$ and $0<y\leq \frac{1}{2}$,
$$
\begin{aligned}
\int_I\frac{dx}{|x+iy-e|}=
\int_{b_0-\gamma_0}^{a_0+\gamma_0}\frac{dx}{\sqrt{(x-e)^2+y^2}}=
\operatorname{arsinh}\frac{a_0+\gamma_0-e}{y}+
\operatorname{arsinh}\frac{e-b_0+\gamma_0}{y}\leq C_*\log y^{-1}
\end{aligned}
$$
 for $0<y\leq\frac{1}{2}$. Therefore, by \eqref{tail},
\begin{equation}\label{tailest}
\int_I\big||F_\mu(x+iy)|^2-|S_J(x+iy)|\big|\,dx
\leq C_*\varepsilon\log y^{-1}.
\end{equation}

We now compute the leading term of the finite sum. Since $J$ is finite, the endpoints
$\{a_k,b_k:k\in J\}$
form a finite set. Choose $\delta>0$ such that the intervals of radius $2\delta$ centered at these endpoints are pairwise disjoint and contained in $I$.

Fix $k\in J$. Near $a_k$, write
$$
S_J(z)=\frac{\beta_{k,-}^2}{z-a_k}+H_{k,-}(z),
$$
where
$$
H_{k,-}(z)=-\frac{\beta_{k,+}^2}{z-b_k}
+\sum_{\substack{j\in J\\j\ne k}}\bigg(\frac{\beta_{j,-}^2}{z-a_j}
-\frac{\beta_{j,+}^2}{z-b_j}\bigg).
$$
Since all poles occurring in $H_{k,-}$ have positive distance from $a_k$, there exists a constant $C_{J}<\infty$ such that
$|H_{k,-}(x+iy)|\leq C_{J}$
whenever $|x-a_k|<\delta$ and $0<y<\delta$. Hence
$$
\bigg||S_J(x+iy)|-\frac{\beta_{k,-}^2}{\sqrt{(x-a_k)^2+y^2}}
\bigg|\leq C_{J}.
$$
Integrating over $(a_k-\delta,a_k+\delta)$ gives
\begin{equation}\label{leftpole}
    \int_{a_k-\delta}^{a_k+\delta}|S_J(x+iy)|\,dx=
\beta_{k,-}^2
\int_{-\delta}^{\delta}\frac{ds}{\sqrt{s^2+y^2}}+O_J(1)=
2\beta_{k,-}^2\log y^{-1}+O_J(1).
\end{equation}
The same argument at $b_k$ gives
\begin{equation}\label{rightpole}
\int_{b_k-\delta}^{b_k+\delta}|S_J(x+iy)|\,dx=
2\beta_{k,+}^2\log y^{-1}+O_J(1).
\end{equation}
On the complement of these endpoint neighborhoods inside $I$, all denominators occurring in $S_J$ are bounded away from zero. Hence
$$
\int_{I\setminus\bigcup_{k\in J}\big((a_k-\delta,a_k+\delta)\cup(b_k-\delta,b_k+\delta)\big)}
|S_J(x+iy)|\,dx
=O_J(1).
$$
Combining this estimate with \eqref{leftpole} and \eqref{rightpole}, we obtain
\begin{equation}\label{finest}
\int_I|S_J(x+iy)|\,dx=2\sum_{k\in J}\big(\beta_{k,-}^2+\beta_{k,+}^2\big)\log y^{-1}
+O_J(1).
\end{equation}

It remains to estimate the contribution outside $I$. Since $\mu$ is supported on $[b_0,a_0]$, for $x<b_0$ or $x>a_0$ away from the support one has
\begin{equation}\label{outside}
    \int_{\mathbb R\setminus I}|F_\mu(x+iy)|^2\,dx\leq
\int_{-\infty}^{b_0-\gamma_0}\frac{dx}{(b_0-x)^2}+
\int_{a_0+\gamma_0}^{\infty}\frac{dx}{(x-a_0)^2}=
\frac{2}{\gamma_0}
\end{equation}
uniformly for $0<y\leq\frac{1}{2}$.
By \eqref{tailest}, \eqref{finest}, and \eqref{outside},
$$
\int_{\mathbb R}|F_\mu(x+iy)|^2\,dx=
2\sum_{k\in J}\big(\beta_{k,-}^2+\beta_{k,+}^2\big)\log y^{-1}\\+
O_J(1)+O(\varepsilon\log y^{-1}).
$$
Dividing by $\log y^{-1}$ and letting $y\downarrow0$ gives
$$
\limsup_{y\downarrow0}
\bigg|
\frac{1}{\log y^{-1}}\int_{\mathbb R}|F_\mu(x+iy)|^2\,dx-2\sum_{k\in J}\big(\beta_{k,-}^2+\beta_{k,+}^2\big)
\bigg|
\leq C_*\varepsilon.
$$
By \eqref{tail}, we have
\begin{equation}\label{Fasym}
\lim_{y\downarrow0}
\frac{1}{\log y^{-1}}\int_{\mathbb R}|F_\mu(x+iy)|^2\,dx=2S_\mu.
\end{equation}
Substituting \eqref{Fasym},\eqref{Fzero} into \eqref{pl1} and using $|\widehat\mu(t)|^2$ is even, we obtain
\begin{equation}\label{Abel2}
\lim_{y\downarrow0}
\frac{1}{\log y^{-1}}
\int_0^\infty e^{-2yt}|\widehat\mu(t)|^2\,dt=\frac{1}{\pi}S_\mu.
\end{equation}

Define
$
H(T)=\int_0^T|\widehat\mu(t)|^2\,dt.
$ Then $H$ is nondecreasing and continuous, $H(0)=0$, and
its Laplace-Stieltjes transform is finite for every $s>0$,
since $|\widehat\mu|\leq1$. Setting $s=2y$ in
\eqref{Abel2} gives
$$
\int_0^\infty e^{-st}\,dH(t)
\sim\frac{S_\mu}{\pi}\log(1/s),
\qquad s\downarrow0.
$$
Karamata's Tauberian theorem
\cite[Theorem~1.7.1]{BGT}, with index zero and slowly
varying function $\log T$, now yields
$H(T)\sim\frac{S_\mu}{\pi}\log T, T\to\infty,$
which proves \eqref{avg}.

\end{proof}

\subsection{Sharp Pointwise Fourier decay}
We now turn to the pointwise Fourier transform. The upper bound follows from an $L^1$ estimate for the Borel transform, while the positivity of the averaged limit in \eqref{avg} shows that the exponent $1/2$ is sharp.

\begin{proof}[Proof of Theorem \ref{thm:fourier}]
We now prove the pointwise upper bound \eqref{Fdecay}. 
For $y>0$, let
$$
u_y(x)=\frac1\pi\operatorname{Im}F_\mu(x+iy)=\frac1\pi\int_{\mathbb{R}}\frac{y\,w_\mu(E)}{(x-E)^2+y^2}\,dE.
$$
Tonelli's theorem gives
\begin{equation}\label{equ:eq}
    \int_{\mathbb{R}}u_y(x)\,dx=\int_{\mathbb{R}}w_\mu(x)\,dx=1.
\end{equation}
We will prove
\begin{equation}\label{smooth}
\|w_\mu-u_y\|_{L^1(\mathbb{R})}\leq\frac{16}{\pi}\sqrt{y}\sum_{k\in\mathcal{K}_0}(\beta_{k,-}+\beta_{k,+}).
\end{equation}
Fix $x\in\Sigma$ at which \eqref{dens} holds. At a point where $w_\mu(x)>u_y(x)$, we have
$$
\begin{aligned}
\pi^2\big(w_\mu(x)-u_y(x)\big)^2
&\leq\pi^2\big(w_\mu(x)^2-u_y(x)^2\big)\\
&=\pi^2w_\mu(x)^2+\operatorname{Re}\big(F_\mu(x+iy)^2\big)-\big(\operatorname{Re}F_\mu(x+iy)\big)^2\\
&\leq\pi^2w_\mu(x)^2+\operatorname{Re}\big(F_\mu(x+iy)^2\big).
\end{aligned}
$$
 Consequently,
$$
\pi^2\big(w_\mu(x)-u_y(x)\big)_+^2\leq\big|\pi^2w_\mu(x)^2+\operatorname{Re}\big(F_\mu(x+iy)^2\big)\big|.
$$
Here the subscript $+$ denotes the positive part. 

For every real $d\ne0$,
$
\operatorname{Re}\frac1{d+iy}-\frac1d=-\frac{y^2}{d(d^2+y^2)}.
$
Applying this identity term by term to \eqref{pf} and \eqref{dens} and using absolutely  convergence, we obtain
$$
\begin{aligned}
\pi^2w_\mu(x)^2+\operatorname{Re}\big(F_\mu(x+iy)^2\big)
={}&-y^2\sum_{k\in\mathcal{K}_0}\frac{\beta_{k,-}^2}{(x-a_k)\big((x-a_k)^2+y^2\big)}\\
&+y^2\sum_{k\in\mathcal{K}_0}\frac{\beta_{k,+}^2}{(x-b_k)\big((x-b_k)^2+y^2\big)}.
\end{aligned}
$$
Taking absolute values and using $\sqrt{\sum_j A_j}\leq\sum_j\sqrt{A_j}$ for nonnegative $A_j$ yields
\begin{equation}\label{perr}
\begin{aligned}
\pi\big(w_\mu(x)-u_y(x)\big)_+
&\leq y\sum_{k\in\mathcal{K}_0}\frac{\beta_{k,-}}{\sqrt{|x-a_k|}\sqrt{(x-a_k)^2+y^2}}\\
&\quad+y\sum_{k\in\mathcal{K}_0}\frac{\beta_{k,+}}{\sqrt{|x-b_k|}\sqrt{(x-b_k)^2+y^2}}.
\end{aligned}
\end{equation}
This inequality also holds almost everywhere outside $\Sigma$, since $w_\mu=0$ there and $u_y\geq0$.

Integrating \eqref{perr}, using
$$
\int_{\mathbb{R}}\frac{y}{\sqrt{|x|}\sqrt{x^2+y^2}}\,dx
\leq\int_{|x|\leq y}\frac{dx}{\sqrt{|x|}}+y\int_{|x|>y}\frac{dx}{|x|^{3/2}}
=4\sqrt{y}+4\sqrt{y}=8\sqrt{y}
$$
and applying Tonelli's theorem give
$$
\int_{\mathbb{R}}(w_\mu-u_y)_+\,dx\leq\frac8\pi\sqrt{y}\sum_{k\in\mathcal{K}_0}(\beta_{k,-}+\beta_{k,+}).
$$
By \eqref{equ:eq},
$
\int_{\mathbb{R}}(w_\mu-u_y)_+\,dx=\int_{\mathbb{R}}(u_y-w_\mu)_+\,dx.
$
It follows that
$$
\|w_\mu-u_y\|_{L^1(\mathbb{R})}=2\int_{\mathbb{R}}(w_\mu-u_y)_+\,dx,
$$
which proves \eqref{smooth}.
Since $
 \widehat{w_\mu}(t)-\widehat{u_y}(t)=\big(1-e^{-y|t|}\big)\widehat\mu(t).
$
Using \eqref{smooth}, we have 
$$
\big(1-e^{-y|t|}\big)|\widehat\mu(t)|\leq\frac{16}{\pi}\sqrt{y}\sum_{k\in\mathcal{K}_0}(\beta_{k,-}+\beta_{k,+}).
$$
For $|t|\geq1$, choose $y=|t|^{-1}$, this proves the upper bound.

It remains to prove \eqref{sharp}. 
Let $L=\limsup_{t\to\infty}t^{1/2}|\widehat\mu(t)|.$
If $L=\infty$, there is nothing to prove. Otherwise, for every
$\varepsilon>0$, there exists $T_\varepsilon>0$ such that
$
|\widehat\mu(t)|^2\leq\frac{(L+\varepsilon)^2}{t},t\geq T_\varepsilon.
$
Hence, for $T>T_\varepsilon$,
$$
\int_0^T|\widehat\mu(t)|^2\,dt\leq
\int_0^{T_\varepsilon}|\widehat\mu(t)|^2\,dt+
(L+\varepsilon)^2
\int_{T_\varepsilon}^T\frac{dt}{t}\leq
T_\varepsilon+(L+\varepsilon)^2\log T.
$$
Dividing by $\log T$ and letting $T\to\infty$ gives
$
\frac1\pi
\sum_{k\in\mathcal K_0}
\bigl(\beta_{k,-}^2+\beta_{k,+}^2\bigr)\leq
(L+\varepsilon)^2.
$
Letting $\varepsilon\downarrow0$ yields the desired result.
\end{proof}

\section{Proofs of the quasiperiodic applications}
\subsection{Proof of Theorem \ref{thm:qp-holder} and Theorem \ref{thm:qp-fourier}}

\begin{proof}
As  $H_{v,\alpha,\theta}$  is subcritical, then for any $E\in \Sigma$, $L(\alpha,A_E)=0$, then by \cite[Theorem 2.2]{AvilaAMO}, $\mu_{\theta,n}$
is reflectionless on $\Sigma$ for every
$\theta\in\mathbb T$ (basically it follows from Kotani's theory \cite{Kotani} and compactness argument \cite{Remling}).

Let
$$
\mathcal K=\big\{k\in\mathbb Z\setminus\{0\}:
 G_k=(a_k,b_k)
\text{ is an open gap of }\Sigma
\big\}.
$$
 By the gap-labelling theorem \cite{JM},
$
N(E)=k\alpha\pmod{\mathbb Z}.
$

As $\alpha \in \mathrm{DC}$, by \cite[Theorem 1.6]{LYZZ}, the gaps are polynomially separated  
$$
\operatorname{dist}\big( G_k, G_l\big)\geq c_{\rm s}|k- l|^{-2\tau},
$$
while the gaps have exponential decay 
$\gamma_ l\leq C_ge^{-c_g| l|}$. 
It follows that 
$$
\log\mathcal C_k=\frac{1}{2}
\sum_{\substack{ l\in\mathcal K_0\\  l\neq k}}
\log\left(1+\frac{\gamma_ l}{\eta_{k l}}
\right)\leq\frac{1}{2}
\sum_{\substack{ l\in\mathcal K_0\\  l\neq k}}
\log\left(1+Ce^{-c_g| l|}|k- l|^{2\tau}\right)\leq
C\bigl(1+\log^{2}(2+|k|)\bigr).
$$
Therefore,
we obtain
$
\sum_{j\in\mathcal K_0} \sqrt{\gamma_j}\mathcal C_j<\infty.
$

Corollary~\ref{thm:holder} now applies and yields
$
\mu_{\theta,n}(X)\leq C_\Sigma|X|^{1/2}
$
for every measurable set $X$. Finally, let $P_\theta(X)$ be the spectral projection of $H_{ v,\alpha,\theta}$ associated with $X$. For $f\in   \ell^1(\mathbb Z)$,
$$
\begin{aligned}
\mu_{\theta,f}(X)^{1/2}=\|P_\theta(X)f\|_{\ell^2}&\leq\sum_{n\in\mathbb Z}|f(n)|\|P_\theta(X)\delta_n\|_{\ell^2}=\sum_{n\in\mathbb Z}|f(n)|\mu_{\theta,\delta_n}(X)^{1/2}\\
&\leq C^{1/2}|X|^{1/4}\|f\|_{   \ell^1}.
\end{aligned}
$$
Squaring proves the estimate in Theorem~\ref{thm:qp-holder}. Theorem \ref{thm:qp-fourier} follows from Theorem \ref{thm:fourier} and $
\beta_{0,\pm}(\theta)\geq\frac{1}{\sqrt{\gamma_0}\mathcal C_0}.
$
\end{proof}

\subsection{Proof of Theorem \ref{prop:liouville-obstruction}:}

\begin{proof}
Let $dN_{\lambda,\alpha}$ denote the density-of-states measure.  By
Avila-Last-Shamis-Zhou
\cite{AvilaLastShamisZhou}, the assumption
implies that  for every $s>0$,
\begin{equation}
\label{eq:ids-not-holder-liouville}
\sup_{0<|I|\leq1}\frac{dN_{\lambda,\alpha}(I)}{|I|^s}=\infty.
\end{equation}
Fix $s>0$.  For $M\in\mathbb N$, define
\begin{equation*}
\mathcal F_{s,M}=\left\{\theta\in\mathbb T:\nu_\theta(I)\leq M|I|^s
\text{ for every bounded open interval }I
\right\},
\end{equation*}
and let
$\mathcal U_{s,M}=
\mathbb T\setminus\mathcal F_{s,M}.
$ Since
$\theta\mapsto H_{\lambda,\alpha,\theta}$ is continuous in operator
norm, the measures $\nu_\theta$, depend
weakly continuously on $\theta$.  Therefore, for every open interval
$I$, the map
$\theta\longmapsto\nu_\theta(I)$ is lower semicontinuous.  It follows that $\mathcal F_{s,M}$ is
closed and $\mathcal U_{s,M}$ is open.

Set
\begin{equation}
\label{eq:Bs-definition}
\mathcal B_s=\bigcap_{M=1}^\infty\mathcal U_{s,M}.
\end{equation}
 Let
$P_I^\theta=
\mathbf 1_I(H_{\lambda,\alpha,\theta}).
$ Since
$\delta_2=H_{\lambda,\alpha,\theta}\delta_1-2\lambda\cos 2\pi(\theta+\alpha)\delta_1-
\delta_0,$
and $P_I^\theta$ commutes with
$H_{\lambda,\alpha,\theta}$, we have
\begin{equation*}
\left\|P_I^\theta\delta_2\right\|
\leq C_\lambda
\left(\left\|P_I^\theta\delta_0\right\|+\left\|P_I^\theta\delta_1\right\|
\right),
\end{equation*}
where $C_\lambda$ is independent of $\theta$ and $I$.  Hence
\begin{equation*}
\mu_{\theta,2}(I)
\leq C_\lambda
\left(\mu_{\theta,0}(I)+\mu_{\theta,1}(I)\right).
\end{equation*}
Solving the same recurrence for $\delta_0$ gives the reverse
estimate.  By covariance, after changing $C_\lambda$ if necessary,
\begin{equation}
\label{eq:nu-comparison}
C_\lambda^{-1}\nu_\theta(I)
\leq\nu_{\theta+\alpha}(I)\leq
C_\lambda\nu_\theta(I)
\end{equation}
for every $\theta\in\mathbb T$ and every bounded open interval $I$.

We claim that $\mathcal B_s$ is nonempty.  Suppose otherwise.  Then
$\mathbb T=
\bigcup_{M=1}^\infty
\mathcal F_{s,M}.
$ Since each $\mathcal F_{s,M}$ is closed, the Baire category theorem
implies that there exist $M_0\in\mathbb N$ and a nonempty open
interval $U\subset\mathbb T$ such that
$U\subset\mathcal F_{s,M_0}.$

Since the rotation by $\alpha$ is minimal, there exist finitely many
integers $k_1,\ldots,k_J$ such that
$\mathbb T=\bigcup_{j=1}^J(U-k_j\alpha).$ Hence, for every $\theta\in\mathbb T$, one can choose $j$ such that
$\theta+k_j\alpha\in U.$ 
Iterating \eqref{eq:nu-comparison}, we obtain
$\nu_\theta(I)\leq
C_\lambda^{|k_j|}
\nu_{\theta+k_j\alpha}(I)\leq
C_s|I|^s.$
Therefore
$dN_{\lambda,\alpha}(I)\leq C_s|I|^s,
$contradicting \eqref{eq:ids-not-holder-liouville}.  Hence
$\mathcal B_s\neq\varnothing.$
Choose $\theta_0\in\mathcal B_s$.  Since $\mathcal B_s$ is invariant
under the rotation by $\alpha$, we have $\mathcal B_s$ is dense.

By \eqref{eq:Bs-definition},
$\mathcal B_s\subset\mathcal U_{s,M}$ for every $M\in\mathbb N$.  Since $\mathcal B_s$ is dense, each
$\mathcal U_{s,M}$ is dense.  Thus
\begin{equation*}
\Theta:=\bigcap_{j=1}^\infty
\mathcal B_{1/j}=\bigcap_{j=1}^\infty\bigcap_{M=1}^\infty\mathcal U_{1/j,M}
\end{equation*}
 is a dense $G_\delta$ set.
Let $\theta\in\Theta$. By the definition of $\Theta$,  the first assertion
in \eqref{eq:liop} holds for every $s>0$.
Suppose that \eqref{eq:lioa} fails.  Then there exist
$s>0$, $C>0$, and $T_0>0$ such that
\begin{equation*}
\frac1T\int_0^T|\widehat\nu_\theta(t)|^2\,dt
\leq CT^{-s},
\qquad
T\geq T_0.
\end{equation*}
By \cite[Theorem 3.1]{Last}, we have $\nu_\theta$ is Holder continuous, contradicting
 the first assertion in \eqref{eq:liop}. Hence \eqref{eq:lioa} holds.

Finally, suppose that the second assertion in
\eqref{eq:liop} fails.  Then
there exist $s>0$, $C>0$, and $T_0>0$ such that
$|\widehat\nu_\theta(t)|\leq
C|t|^{-s},|t|\geq T_0.$
It follows that
\begin{equation*}
\frac1T\int_0^T
|\widehat\nu_\theta(t)|^2\,dt\leq C
\begin{cases}
T^{-2s}, & 0<s<\frac12,\\
T^{-1}\log(T), & s=\frac12,\\
T^{-1}, & s>\frac12.
\end{cases}
\end{equation*}
In each case, it contradicts \eqref{eq:lioa}. Hence the
second assertion in \eqref{eq:liop} holds.
\end{proof}

\subsection{Proof of Theorem~\ref{thm:qp-density}:}

Let $
\nu_\theta=\frac12\big(\mu_{\theta,0}+\mu_{\theta,1}\big),
d\nu_\theta(E)=w_\theta(E)\,dE.
$
Let $F_\theta$ be its Borel transform and let $\xi_k(\theta)\in[a_k,b_k]$ be the corresponding Dirichlet datum in the open gap $G_k=(a_k,b_k)$.  We  now obtain the density-one lower profile from the $L^1$ endpoint asymptotics.

\begin{proof}
The upper bound follows from Corollary~\ref{thm:holder}. Now we prove the lower bound estimate. By \cite[Footnote~1]{AvilaAMO}, $w_\theta(E)\geq\frac{1}{2\pi}$ for almost every $E\in\Sigma$. Therefore,
by Corollary \ref{prop:density-one1}, there is a unique zero $a_k<\xi_k(\theta)<b_k.$
Since $(H_{V,\alpha,\theta}-x)^{-1}$ depends
continuously on $\theta$, $F_\theta(x)$ is continuous in $\theta$.
We claim that $\theta\mapsto\xi_k(\theta)$ is continuous. Indeed, fix
$\theta_0\in\mathbb T$ and write $\xi_k(\theta_0)=\xi$. Given
$a_k<x_-<\xi<x_+<b_k$, strict monotonicity of $F_{\theta_0}$ on the gap gives
$F_{\theta_0}(x_-)<0<F_{\theta_0}(x_+).$  
By continuity in $\theta$, the same inequalities hold for all $\theta$
sufficiently close to $\theta_0$. Since $F_\theta$ is strictly increasing on
$(a_k,b_k)$ and vanishes at $\xi_k(\theta)$, it follows that
$x_-<\xi_k(\theta)<x_+.$
Letting $x_-\uparrow\xi$ and $x_+\downarrow\xi$ proves the continuity of
$\xi_k(\theta)$.
Therefore
$$
\beta_{k,-}(\theta)\geq\frac{\min_{\theta\in\mathbb T}\big(\xi_k(\theta)-a_k\big)}{\sqrt{\gamma_k}\sqrt{(a_k-b_0)(a_0-a_k)}\,\mathcal C_k}>0,\beta_{k,+}(\theta)
\geq
\frac{\min_{\theta\in\mathbb T}\big(b_k-\xi_k(\theta)\big)}
{\sqrt{\gamma_k}\sqrt{(b_k-b_0)(a_0-b_k)}\,\mathcal C_k}>0
$$
uniformly in $\theta$.  For $k=0$, the assertion follows directly from the
$
\beta_{0,\pm}(\theta)\geq\frac{1}{\sqrt{\gamma_0}\mathcal C_0}.
$
Therefore 
$$
h_k:=\frac1{2\pi}\inf_{\theta}
\min\{\beta_{k,-}(\theta),\beta_{k,+}(\theta)\}>0.
$$
Let $U_{k,\theta,\pm}$ be the sets in
Corollary~\ref{prop:density-one1} for $\mu=\nu_\theta$.
They have one-sided density one at the corresponding endpoints;
for fixed $k$, this is uniform in $\theta$ by the displayed
estimate.

Choose $\varepsilon_k>0$ with $2\sum_k\varepsilon_k\leq1/2$
and set $d_k=\varepsilon_k\min\{h_k,1\}$.
Then $\sum_kd_k<\infty$, and the defining inequalities give
$$
\sum_{k\in\mathcal K_0}d_k
\left(
\frac{\mathbf1_{U_{k,\theta,-}}(E)}{\sqrt{a_k-E}}
+
\frac{\mathbf1_{U_{k,\theta,+}}(E)}{\sqrt{E-b_k}}
\right)
\leq2\sum_k\varepsilon_k\,w_\theta(E)
\leq\frac12w_\theta(E)
$$
almost everywhere, with each summand set to zero outside
its indicated set. Combining this with
$w_\theta\geq(2\pi)^{-1}\mathbf1_\Sigma$ proves the lower bound.
\end{proof}

\subsection{Cancellation and bounded density}
\begin{proposition}\label{cancel}
Let $V\in C^\omega(\mathbb T,\mathbb R)$ be even and let $\alpha$ be Diophantine. Assume that every energy in the spectrum $\Sigma$ is subcritical and that $\Sigma$ is a Cantor set. Then, at the phase $\theta=0$, for every $k\in\mathcal K_0$ and every $r_0>0$,
$$
\operatorname*{ess\,inf}_{E\in\Sigma\cap(a_k-r_0,a_k)}
w_0(E)\sqrt{{a_k-E}}=
\operatorname*{ess\,inf}_{E\in\Sigma\cap(b_k,b_k+r_0)}
w_0(E)\sqrt{{E-b_k}}=0.
$$
\end{proposition}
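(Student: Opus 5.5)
The plan is to find, inside every one-sided edge neighbourhood $(a_k-r_0,a_k)$, a set of positive Lebesgue measure of energies $E\in\Sigma$ at which $w_0(E)$ stays bounded by a constant independent of the scale. Since $\sqrt{a_k-E}\to0$ as $E\to a_k$, this forces the essential infimum to vanish. The introduction sketches the mechanism: evenness of $V$ and the Cantor structure of $\Sigma$ produce points near $a_k$ where the Green function is controlled. We then spread this control to positive-measure sets by analytic distortion.

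\emph{Step 1: Dirichlet data at $\theta=0$ sit at symmetric points.} For even $V$, the operator $H_{V,\alpha,0}$ commutes with the reflection $n\mapsto -n$. Consequently the half-line Weyl functions satisfy $m_+(z)=m_-(z)$ at $\theta=0$, after the standard indexing shift; equivalently, $G_0(0,0;z)=-1/(m_++m_-)$ has its pole/zero structure tied to the half-line Dirichlet spectrum. The canonical Dirichlet datum $\xi_j(0)$ in each gap $G_j$ is then a zero of $F_0$ at which the two half-line contributions balance. I would use the identity $F_{\mu_{0,0}}=-1/(2m(z))$, with $m=m_\pm$, to show the following: at energies $E\in\Sigma$ near $\xi_j$-type points, $\operatorname{Im} m(E+i0)$ is bounded below and $|m|$ is bounded above. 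The point is that these are comparison points where $G_0$ is bounded above and away from zero, with the bound uniform in the gap index $j$ up to the Craig factors.

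\emph{Step 2: gaps accumulating at $a_k$.} Since $\Sigma$ is a Cantor set, $a_k$ is not isolated from other gaps, so there are gaps $G_{j_n}$ with $b_{j_n}\uparrow a_k$. Near each such gap edge, on the spectral side, I would locate sets $S_n\subset\Sigma\cap(a_{j_n}-\rho_n,a_{j_n})$ (or on the side of $b_{j_n}$) with $|S_n|\ge c\rho_n$ and $w_0\le C$ on $S_n$. Here $C$ is independent of $n$. The tool is the positive density given by Lemma~\ref{lem:weak-hom}, or better the homogeneity of \cite{LYZZ}, combined with a Koebe/Harnack distortion argument for the Herglotz function $F_0$ on a disc of comparable size. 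If $|F_0|$ is controlled at one point $\xi_{j_n}+i\rho_n$, then Harnack controls $\operatorname{Im}F_0$ on a Whitney box of size $\rho_n$. The $L^1$-type bound from the Poisson smoothing estimate \eqref{smooth} in localized form then transfers this control to the boundary density on a positive proportion of the spectrum in that box. Chebyshev's inequality then yields $S_n$.

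\emph{Step 3: conclusion.} On $S_n\subset(a_k-r_0,a_k)$ we have $w_0(E)\sqrt{a_k-E}\le C\sqrt{a_k-E}\to0$ as $n\to\infty$, with $|S_n|>0$. Hence the essential infimum is $0$. The same argument applies at $b_k$ with gaps accumulating from the right. The main obstacle is Step~2: we need uniform (in $n$) boundedness of $F_0$ at height comparable to the gap scale near the accumulating gaps. The danger is that the fixed pole $\beta_{k,-}^2/(E-a_k)$ in \eqref{pf} dominates there. The plan is to use the symmetry from Step~1 to show that $F_0(\xi_{j_n}+iy)$ stays bounded at $y\sim\gamma_{j_n}$ or $y\sim\eta$-scale, and then let the positivity of $\operatorname{Im}$ and distortion do the rest. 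If this fails, a fallback is to exploit the signed identity \eqref{dens} directly: near $b_{j_n}$ the negative term $-\beta_{j_n,+}^2/(b_{j_n}-E)$ can cancel the $a_k$-term on a set where the two are comparable, and we would estimate the measure of that set.
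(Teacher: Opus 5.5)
Your outline has the same architecture as the paper's proof: bounded-density sets of positive measure attached to gaps accumulating at $a_k$, obtained from homogeneity and Chebyshev. But the step that carries all the weight is a bound on $F=F_{\nu_0}$ at a point above the Dirichlet zero $\xi_l$ that is \emph{uniform in $l$}. You leave this as a plan, and the tools you name would not produce it.

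Your relation $F_{\mu_{0,0}}=-1/(2m)$ concerns $g:=M_{00}$, whereas the $\xi_l$ are zeros of $F=\tfrac12(M_{00}+M_{11})$. The missing input is the identity obtained from the Schur complement with $m_+=m_-=m$ (half-lines $\{\pm1,\pm2,\dots\}$), after eliminating $m$:
\[
F(z)=\frac{(z-V(0))^2+4}{8}\,g(z)-\frac1{8g(z)}.
\]
At $F(\xi_l)=0$ this forces $|g(\xi_l)|=((\xi_l-V(0))^2+4)^{-1/2}\asymp1$, with no dependence on $l$. A bound ``uniform up to Craig factors'' would not do, since nothing guarantees $\mathcal C_l\sqrt{a_k-E}\to0$ along the accumulating gaps. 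Boundary control of $m(E+i0)$ at points of $\Sigma$ is neither derived in your proposal nor needed.

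The distortion must then be applied to $g$, not to $F$. $F$ vanishes at $\xi_l$ and may diverge at the gap edges, so Koebe/Harnack for $F$ has nothing uniform to anchor to. The odd extension identifies $H_+$ with $H$ on its odd reducing subspace, so $\sigma(H_+)\subset\Sigma$. Hence $m$ continues analytically through the gaps, and $g=1/(V(0)-z-2m)$ has neither zeros nor poles there. Thus $g/g(\xi_l)$ omits $(-\infty,0]$ on the disc of radius $\delta_l=\min\{\xi_l-a_l,b_l-\xi_l\}$ about $\xi_l$. Schwarz's lemma applied to $(\sqrt h-1)/(\sqrt h+1)$ then gives $\tfrac19\le|g(z_l)/g(\xi_l)|\le9$ at $z_l=\xi_l+i\delta_l/2$, whence $|F(z_l)|\le C$. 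Note three things: the scale is $\delta_l$, not $\gamma_l$; the good interval $I_l$ must sit at the gap edge nearest $\xi_l$; and $\delta_l>0$ requires $\xi_l$ strictly inside $G_l$, which comes from $w_0\ge1/(2\pi)$ and Corollary~\ref{prop:density-one1}.

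Your transfer step would also fail as stated. \eqref{smooth} controls $\|w_\mu-u_y\|_{L^1}$ only to order $\sqrt y$, far larger than the mass of order $y$ you must bound in a box of size $y$. What works is the elementary Poisson bound $\operatorname{Im}F(z_l)\ge\frac{2}{17\delta_l}\nu_0(I_l)$, for $I_l$ of length $\delta_l$ at the nearer edge, which gives $\nu_0(I_l)\le C\delta_l$. Homogeneity from \cite{LYZZ} then gives $|\Sigma\cap I_l|\ge c\delta_l$; Lemma~\ref{lem:weak-hom} gives nothing at the prescribed scale $\delta_l$. Chebyshev produces a set of measure at least $c\delta_l/2$ on which $w_0\le C_0$. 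Your fallback through \eqref{dens} does not close the gap either. Deciding where the pole terms cancel requires knowing the residues $\beta_{l,\pm}$, i.e.\ the positions of the Dirichlet data, and that is precisely the information the symmetry argument supplies.
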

\begin{proof}
Write $H=H_{V,\alpha,0}$. Let $H_\pm$ be its Dirichlet
restrictions to $\ell^2(\{\pm1,\pm2,\ldots\})$, and set
$m_\pm(z)=
\langle\delta_{\pm1},(H_\pm-z)^{-1}\delta_{\pm1}\rangle.$
The full-line Green matrix
$$M(z)=
\bigl(\langle\delta_i,(H-z)^{-1}\delta_j\rangle\bigr)_{i,j=0}^1$$
is given by the Schur complement formula
$$
M(z)=
\frac1{V(0)-z-m_-(z)-m_+(z)}
\begin{pmatrix}
1&-m_+(z)\\
-m_+(z)&m_+(z)\bigl(V(0)-z-m_-(z)\bigr)
\end{pmatrix};
$$
see \cite[Section~2.5]{Teschl}. In particular,
$$
F(z)=\frac12\operatorname{tr}M(z)
=\int_{\mathbb R}\frac{d\nu_0(E)}{E-z}.
$$

Since $V$ is even and the phase is zero, reflection identifies
the two half-line operators, so $m_+=m_-=:m$.
Moreover, normalized odd extension,
$$
(Uf)_0=0,\qquad
(Uf)_{\pm n}=\pm\frac{f_n}{\sqrt2},\qquad n\geq1,
$$
unitarily identifies $H_+$ with the restriction of $H$
to its reducing odd subspace. Thus $\sigma(H_+)\subset\Sigma$,
and $m$ extends analytically through every open gap.
Consequently,
$$
g(z):=M_{00}(z)=\frac1{V(0)-z-2m(z)}
$$
has no zero there. Taking the trace and eliminating $m$ gives
\begin{equation}\label{eq:fe}
F(z)=\frac{(z-V(0))^2+4}{8}\,g(z)-\frac1{8g(z)}.
\end{equation}

 By \cite[Footnote~1]{AvilaAMO}, $w_\theta(E)\geq\frac{1}{2\pi}$ for almost every $E\in\Sigma$,  thus Corollary \ref{prop:density-one1} shows each open gap $G_l$ contains
a zero $\xi_l$ of $F$. Hence
$$
|g(\xi_l)|=\frac1{\sqrt{(\xi_l-V(0))^2+4}}\asymp1
$$
uniformly in $l$. Set
$$
\delta_l=\min\{\xi_l-a_l,b_l-\xi_l\},
\qquad z_l=\xi_l+i\delta_l/2.
$$
 On the disk $D_l=\{z:|z-\xi_l|<\delta_l\}$, set
$h_l(z)=\frac{g(z)}{g(\xi_l)}.$
Since $g$ has constant nonzero sign on the real diameter,
$h_l$ is positive there. At nonreal points,
$$
\operatorname{Im}g(z)
=\operatorname{Im}z
\int_{\mathbb R}\frac{d\mu_{0,0}(E)}{|E-z|^2}\neq0,
$$
so $h_l(z)$ is nonreal. Thus $h_l$ avoids $(-\infty,0]$,
and its principal square root is analytic with positive
real part. Consequently,
$$
\Psi_l(z)=
\frac{\sqrt{h_l(z)}-1}{\sqrt{h_l(z)}+1}
$$
maps $D_l$ into the unit disk and satisfies
$\Psi_l(\xi_l)=0$. Schwarz's lemma gives
$
|\Psi_l(z_l)|
\leq\frac{|z_l-\xi_l|}{\delta_l}
=\frac12
,$ and consequently  $$
\frac13\leq|\sqrt{h_l(z_l)}|\leq3,
\qquad
\frac19\leq
\left|\frac{g(z_l)}{g(\xi_l)}\right|
\leq9.
$$
Together with $|g(\xi_l)|\asymp1$, this bounds both
$|g(z_l)|$ and $|g(z_l)|^{-1}$ uniformly in $l$.
Since the points $z_l$ remain in a bounded set,
\eqref{eq:fe} yields $|F(z_l)|\leq C$, uniformly in $l$.

Choose
$$
I_l=
\begin{cases}
(a_l-\delta_l,a_l),&\xi_l-a_l\leq b_l-\xi_l,\\
(b_l,b_l+\delta_l),&\xi_l-a_l>b_l-\xi_l.
\end{cases}
$$
For $E\in I_l$, $\delta_l\leq|E-\xi_l|\leq2\delta_l$.
It therefore yields
$$
\operatorname{Im}F(z_l)
\geq
\int_{I_l}
\frac{\delta_l/2}{(E-\xi_l)^2+\delta_l^2/4}\,d\nu_0(E)
\geq\frac{2}{17\delta_l}\nu_0(I_l),
$$
so $\nu_0(I_l)\leq C_1\delta_l$.
If $e_l$ is the chosen endpoint, the opposite half of
$(e_l-\delta_l,e_l+\delta_l)$ lies in $G_l$.
Homogeneity of the spectrum  \cite[Theorem 1.5]{LYZZ} gives
$$
|\Sigma\cap I_l|
=|\Sigma\cap(e_l-\delta_l,e_l+\delta_l)|
\geq c_1\delta_l.
$$
Set $C_0=2C_1/c_1$ and
$$
K_l=\{E\in\Sigma\cap I_l:w_0(E)\leq C_0\}.
$$
By Chebyshev's inequality,
$$
|(\Sigma\cap I_l)\setminus K_l|
\leq \frac{\nu_0(I_l)}{C_0}
\leq\frac{c_1\delta_l}{2},
\qquad
|K_l|\geq\frac{c_1\delta_l}{2}>0.
$$

Fix $k\in\mathcal K_0$ and $0<r<r_0$.
Since $\Sigma$ is perfect and nowhere dense (in the AMO case, see \cite{AJ3}), one can choose
an open gap $G_l$ such that
$[a_l-\gamma_l/2,b_l+\gamma_l/2]\subset(a_k-r,a_k).$
Indeed, choose a spectral point in $(a_k-r,a_k)$ and
distinct gaps converging to it; their lengths tend to zero.
Since $\delta_l\leq\gamma_l/2$, this gives
$K_l\subset\Sigma\cap(a_k-r,a_k)$ and hence
$$
0\leq
\operatorname*{ess\,inf}_{E\in\Sigma\cap(a_k-r_0,a_k)}
w_0(E)\sqrt{a_k-E}
\leq C_0\sqrt r.
$$
Letting $r\downarrow0$ proves the desired  assertion.
\end{proof}

\section*{Acknowledgements} 
We thank Dejun Feng and Zhiyuan Zhang for helpful discussions at an early stage of this work, and Zhongkai Tao for valuable discussions on Fourier decay that led to the proof of Proposition \ref{prop:avg}.
 Zhenfu Wang is supported by NSFC grant (124B2011) and Nankai Zhide Foundation.  Qi Zhou is supported by NSFC grant (12531006, 12526201) and Nankai Zhide Foundation.

\section*{Statement on the Use of AI}

ChatGPT 5.6 Sol and ChatGPT 6 were used for language editing, reference checking and   reviewing
the proofs for clarity and internal consistency. It was used at an early stage assist in identifying the phenomenon
established in Proposition~\ref{cancel}, the  proof was subsequently  reformulated, and fully rewritten by the authors based on the ideas of Theorem~\ref{thm:end}. The authors take full responsibility for all mathematical arguments and conclusions in the manuscript.

\end{document}